\title{A family of link concordance invariants from perturbed $sl(n)$ homology.}
\begin{document}
\author{Gahye Jeong}
\vspace{-10mm}
\maketitle
\newtheorem{thm}{Theorem}
\newtheorem{df}{Definition}
\newtheorem{eg}{Example}
\newtheorem{prop}{Proposition}
\newtheorem{cor}{Corollary}
\newtheorem{rmk}{Remark}
\newtheorem{lem}{Lemma}

\newcommand{\oo}{\mathbf{o}}
\newcommand{\qgr}{\text{qgr}}

\numberwithin{equation}{section}
\numberwithin{figure}{section}
\begin{abstract}
 We define a family of link concordance invariants $\left\{ s_n \right\}_{n=2,3, \cdots}$. These link concordance invariants give lower bounds on the slice genus of a link $L$. We compute the slice genus of positive links. Moreover, these invariants give lower bounds on the link splitting number of a link. Especially, this new lower bound determines the splitting number of positive torus links. This is a generalization of Lobb's knot concordance invariants $\left\{ s_n \right\}$, obtained from Gornik's spectral sequence.
\end{abstract}
\vspace{10mm}
\section{Introduction}

 During the past two decades, a variety of knot homologies have been introduced as categorifications of knot polynomials. Knot homologies may be related by spectral sequences. In particular, some spectral sequences start from a specific knot homology and end in a trivial homology. The gradings or the filtration levels appeared in the last page of these spectral sequences often define knot invariants. Various knot concordance invariants have been constructed in this way.

 In \cite{khovanov2002}, Khovanov constructed a chain complex with a homological grading and a quantum grading for a link diagram, whose homology group is a link invariant. The Khovanov homology categorifies the Jones polynomial. By perturbing the differential of the Khovanov chain complex, Lee \cite{lee} defined a spectral sequence from the Khovanov homology to $\mathbb{C}^2$ for knots. From the quantum grading of the last page of Lee's spectral sequence, Rasmussen \cite{rasmussen} extracted a homomorphism $s$ from the knot concordance group to $\mathbb{Z}$. Rasmussen's $s$ invariant gives a slice genus bound for a knot. This lower bound provides a simpler proof of Milnor's conjecture on the slice genus of torus knots than the pre-existing proofs. 

 In \cite{khovanov}, Khovanov and Rozansky constructed $sl(n)$ homology for an oriented link in $S^3$ by using a matrix factorization, which is a categorification of $sl(n)$ polynomial. In \cite{gornik}, Gornik defined a spectral sequence from the Khovanov-Rozansky $sl(n)$ knot homology to $\mathbb{C}^n.$ Lobb and Wu independently proved a family of inequalities, which provide the lower bounds on the slice genus of knots from this spectral sequence in \cite{lobb1, wu}. In \cite{lobb2}, Lobb constructed a family of knot concordance invariants $\left\{ s_n \right\}$, which give slice genus bounds. Moreover, Lobb showed that $s_n$ determines Gornik's homology, that is the last page of this spectral sequence, completely for knots. Lobb conjectured that $\left\{ s_n \right\}$ might have a linear dependence. However, Lewark showed that some subsets of $\left\{s_n : n=2,3, \cdots \right\}$ are linearly independent in \cite{lewark}. Then it is quite plausible that $\left\{s_n \right\}$ may provide more information about knots. In this paper, we want to generalize Lobb's $s_n$ invariants for links. We introduce a family of invariants $s_n (L) \in \mathbb{Z}$ for an oriented link $L \subset S^3$ and $n=2,3, \cdots$, which is equal to Lobb's $s_n$ invariant when $L$ is a knot. Then the link invariants $\left\{s_n\right\}$ share similar properties with the original $s_n$ knot invariants.

 These $s_n$ link invariants give obstructions to sliceness of links. There are two ways to generalize the concept of sliceness for links. Following the definitions in \cite{fox}, we say $L$ is {\it slice in the strong sense} if there is a proper and smooth embedding of $l$ disks $D_1, \cdots D_l$ into $B^4$ such that $\partial D_i$ is $i$ th component of $L$ for $i=1,2, \cdots, l$. On the other hand, we say $L \subset S^3$ is {\it slice in the ordinary sense} if there is an oriented connected genus $0$ surface $F$ which is properly and smoothly embedded in $B^4$ such that $\partial F = L$. The slice genus $g_4 (L)$ is defined to be the minimal genus of an oriented connected surface $F$ which is properly and smoothly embedded in $B^4$ with the boundary $L$. Then $g_4 (L) = 0 \iff L$ is slice in the ordinary sense. Theorem \ref{main} shows how $s_n$ link invariants are related to the ordinary sense of sliceness. 
\begin{thm}\label{main} Suppose $F$ is a properly embedded oriented surface in $B^4$ with genus $g(F)$ whose boundary is equal to $L \subset S^3$. Let $k$ be the number of connected components of $F$. Then $$(n-1) (2k-1-\chi(F)) \ge s_n(L) \ge (n-1) (\chi (F) - 1).$$ Note that $\chi(F) = 2k-2g(F) - l$. 
 In particular, when $F$ is connected, $$|\frac{s_n (L)}{1-n}| \le 2g (F) + l-1.$$ Therefore, $$|\frac{s_n (L)}{1-n}| \le 2g_4 (L)+ l-1.$$
\end{thm}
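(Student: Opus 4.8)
The plan is to follow the standard cobordism-map strategy used by Rasmussen, Lobb, and Wu, adapted to the link setting. The key ingredients are: (i) the $s_n$ invariant is defined via a quantum filtration level on the perturbed $sl(n)$ homology (Gornik-type deformation), so it should come with adjunction-type or "movie move" behavior under cobordisms; (ii) a decomposable cobordism $\Sigma$ from $L_0$ to $L_1$ in $S^3 \times [0,1]$ induces a filtered chain map whose filtered degree is controlled by $(n-1)\chi(\Sigma)$ — this is the Khovanov–Rozansky analogue of Rasmussen's "$s$ changes by at most the genus" lemma. I would first state and invoke this cobordism inequality as a lemma (presumably proved just before this theorem, or citable from Lobb/Wu), giving: if $\Sigma \colon L_0 \to L_1$ is a connected cobordism then $|s_n(L_0) - s_n(L_1)| \le (n-1)(-\chi(\Sigma))$, with the appropriate one-sided refinements when $\Sigma$ has no closed components and each component of $\Sigma$ touches each of $L_0, L_1$.

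**Next I would** reduce the theorem to this cobordism bound by a capping argument. Given $F \subset B^4$ with $\partial F = L$, $k$ components, push $F$ into $S^3 \times [0,1]$ to view it as a cobordism from the empty link (or from the $k$-component unlink, after the obvious move of tubing off the disks you'd cap it with) to $L$. More precisely: delete $k$ small disjoint disks, one from each component of $F$, to get a cobordism $\Sigma$ from the $k$-component unlink $U_k$ to $L$, with $\chi(\Sigma) = \chi(F) - k$ and each component of $\Sigma$ meeting both ends. Then I need two computational inputs: $s_n(U_k) = (n-1)(1-k)$ (the unlink value, which should be immediate from the definition since the perturbed homology of the unlink is explicit — it's a tensor power of the single-circle Frobenius algebra, concentrated in the expected bidegrees), and the one-sided cobordism inequalities. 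Applying the inequality to $\Sigma$ and to a reversed/dual cobordism $\bar\Sigma \colon L \to U_k$ yields both
\[
s_n(L) \ge s_n(U_k) + (n-1)(\chi(\Sigma)) = (n-1)(\chi(F) - k) + (n-1)(1-k) \ \text{(lower side)}
\]
and the matching upper bound, after bookkeeping the factors; rearranging with $\chi(F) = 2k - 2g(F) - l$ gives exactly the displayed two-sided inequality $(n-1)(2k - 1 - \chi(F)) \ge s_n(L) \ge (n-1)(\chi(F)-1)$. The final two displays are pure algebra: set $k=1$ (connected $F$) so $\chi(F) = 2 - 2g(F) - l$, whence $|s_n(L)/(1-n)| \le 2g(F) + l - 1$, and then take the infimum over connected $F$ to replace $g(F)$ by $g_4(L)$.

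**The main obstacle I expect** is establishing the one-sided (asymmetric) cobordism inequality with the correct constants in the link setting — i.e. the statement that a cobordism with no closed components, every component touching $L_0$, shifts $s_n$ in a controlled one-directional way — since for links one must be careful that "merge" and "split" saddles contribute asymmetrically to the filtration shift and that the number of components of the cobordism (not just its Euler characteristic) enters. Concretely: a death of a circle (cap) and a birth of a circle (cup) are not symmetric for the $s_n$-filtration, and the $2k-1$ versus $\chi(F)-1$ asymmetry in the statement is precisely the footprint of this. I would handle it by tracking, for a generic Morse function on $\Sigma$, the filtered degree of the elementary chain maps for births, deaths, and saddles (these are the local computations in Lobb–Wu / Khovanov–Rozansky), summing, and using that the relevant homology class (the image of the canonical generator associated to an orientation) is never killed — this nonvanishing, which in Lee/Gornik theory comes from the explicit idempotent decomposition of the deformed homology, is the one place where a genuinely new argument for links (as opposed to a citation) may be needed, though it should follow from the link-version of Gornik's computation that the author has presumably set up in the preceding sections.
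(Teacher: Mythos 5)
Your overall strategy matches the paper's: cap off $F$ to a cobordism between $L$ and an unlink, apply a filtered-degree estimate for the induced map on perturbed $sl(n)$ homology (the paper's Lemma~\ref{lem2}), and plug in the value of $s_n$ on unlinks. The paper likewise uses Lobb's ordering of elementary cobordisms (Theorem~\ref{1.6}) to verify the needed nonvanishing of the image of the canonical generator, which you flagged as the delicate point. Where the paper is mildly asymmetric --- it removes a single disk from $F$ to get a cobordism $L \to U$ for the lower bound, and one disk per component to get $U_k \to L$ for the upper bound --- you cap symmetrically to $U_k \to L$ and its reverse; both choices yield the stated two-sided inequality once the constants are right.

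However, there is a genuine error that prevents your argument from producing the theorem: you take $s_n(U_k) = (n-1)(1-k)$, but the correct value (the paper's Proposition~\ref{prop2}, and forced by Corollary~\ref{str} or by the disjoint-union formula~(\ref{union})) is $s_n(U_k) = (n-1)(k-1)$, the \emph{opposite} sign for $k>1$. With your value, the ``lower side'' computation gives
$s_n(L) \ge (n-1)(\chi(F)-k) + (n-1)(1-k) = (n-1)(\chi(F)-2k+1)$,
which is strictly weaker than the claimed $(n-1)(\chi(F)-1)$ whenever $k>1$, and the matching upper-bound computation will likewise miss the stated $(n-1)(2k-1-\chi(F))$. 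With the corrected unlink value the arithmetic does close up, since
$(n-1)(k-1) \pm (n-1)(k-\chi(F))$ gives exactly the two ends of the interval. This also corrects your interpretation of the asymmetry: it is not a birth-versus-death imbalance in the filtered degrees (a $0$-handle and a $2$-handle both shift the filtration by $1-n$), but simply the fact that the bounds are symmetric about the nonzero value $s_n(U_k)=(n-1)(k-1)$; the apparent imbalance in the formula comes from recentering around $0$ rather than around $s_n(U_k)$.

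One more point worth making explicit: the nonvanishing you correctly worry about --- that the canonical generator $[g^{D}_i]$ survives to a nonzero multiple of $[g^{D'}_i]$ under the cobordism map --- is not automatic and needs the reordering of elementary moves so that all $0$-handles occur first (and the $2$-handles last). In your symmetric version you need this for \emph{both} $U_k \to L$ and $L \to U_k$, so you should say a word about why the reversed cobordism also admits such an ordering; the paper sidesteps this on one side by capping to a single unknot and running the Lobb ordering, then $(k-1)$ final $2$-handles, so only one direction must be checked in detail.
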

When $F$ is a collection of disjoint disks, $k =\chi(F) = l$. Then the following corollary shows that the strong sliceness determines $s_n$ link invariants completely.
\begin{cor}\label{str} If an $l$ component link $L$ is strongly slice, then $s_n (L) = (n-1)(l-1)$. 
\end{cor}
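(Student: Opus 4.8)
The plan is to obtain the corollary as an immediate specialization of Theorem~\ref{main}. By the definition of strong sliceness recalled in the introduction, if $L$ is strongly slice there exist $l$ pairwise disjoint, properly and smoothly embedded disks $D_1,\dots,D_l \subset B^4$ with $\partial D_i$ the $i$-th component of $L$. Set $F = D_1 \sqcup \cdots \sqcup D_l$. This $F$ is a properly embedded oriented surface in $B^4$ with $\partial F = L$, so Theorem~\ref{main} applies to it. Its number of connected components is $k = l$, each component is a disk so $g(F)=0$, and therefore $\chi(F) = 2k - 2g(F) - l = 2l - l = l$.

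Substituting $k = \chi(F) = l$ into the two-sided estimate of Theorem~\ref{main} gives
\[
(n-1)\bigl(2l - 1 - l\bigr) \;\ge\; s_n(L) \;\ge\; (n-1)\bigl(l - 1\bigr),
\]
i.e. $(n-1)(l-1) \ge s_n(L) \ge (n-1)(l-1)$. The upper and lower bounds coincide, which forces $s_n(L) = (n-1)(l-1)$, as claimed.

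There is essentially no obstacle beyond Theorem~\ref{main} itself: the only observation needed is that strong sliceness furnishes a surface $F$ that simultaneously realizes the two equalities $k = l$ and $\chi(F) = l$, which is precisely the configuration in which the sandwich of Theorem~\ref{main} collapses to a single value. As a sanity check, for a knot ($l=1$) this recovers $s_n = 0$ for (strongly) slice knots, consistent with the behaviour of the Rasmussen-type invariant $s = s_2$ and with Lobb's $s_n$ for knots.
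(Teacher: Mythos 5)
Your argument is correct and is exactly the route the paper takes: the text immediately preceding the corollary observes that for a union of disjoint disks $k = \chi(F) = l$, and then the two-sided bound of Theorem~\ref{main} collapses to $s_n(L) = (n-1)(l-1)$. Nothing to add.
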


  More generally, the link invariants $\left\{s_n \right\}$ are concordance invariants. 
\begin{thm}\label{thm2}
 When two $l$ component links $L_0, L_1 \subset S^3$ are concordant, that is, there exists an embedding $f: (S^1 \sqcup \cdots \sqcup S^1) \times [0,1] \longrightarrow S^3 \times [0,1]$ such that $f|_{(S^1 \sqcup \cdots \sqcup S^1)  \times  \left\{0\right\}} = L_0$ and $f|_{(S^1 \sqcup \cdots \sqcup S^1) \times \left\{1\right\}} = L_1 \subset S^3 \times \left\{1\right\}$, then $$s_n (L_0) = s_n(L_1).$$
\end{thm}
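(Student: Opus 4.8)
The plan is to deduce concordance invariance from the functoriality of the perturbed $sl(n)$ homology under link cobordisms, following the strategy used by Rasmussen and by Lobb in the knot case, the only genuinely new ingredient being the bookkeeping for the several components.

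First I would recall that $s_n(L)$ is extracted from the quantum filtration on the perturbed ($sl(n)$, Gornik‑type) homology $\mathcal{H}_n(L)$: it is determined by the top and bottom filtration degrees carried by the canonical idempotent generators, which for an $l$‑component link form a basis indexed by the $n^l$ assignments of an $n$‑th root of the deformation parameter to the components of $L$. Next, given a concordance $C \subset S^3 \times [0,1]$ from $L_0$ to $L_1$ — a disjoint union of $l$ embedded annuli, the $i$‑th annulus joining the $i$‑th component of $L_0$ to some component of $L_1$ — I would present $C$ as a movie and apply the cobordism maps of the perturbed theory to obtain a homomorphism $\phi_C \colon \mathcal{H}_n(L_0) \to \mathcal{H}_n(L_1)$, well defined up to a nonzero scalar, and likewise $\phi_{\overline{C}}$ for the reversed concordance $\overline{C}$. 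Since each elementary piece of the movie shifts the quantum filtration by $(n-1)$ times its Euler‑characteristic contribution, both $\phi_C$ and $\phi_{\overline{C}}$ are filtered maps of degree $(n-1)\chi(C) = 0$.

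Then I would show that $\phi_C$ is a filtered isomorphism. The key computation is that, because every component of $C$ is an annulus with one boundary circle on $L_0$ and one on $L_1$, the map $\phi_C$ sends a canonical generator $v_\epsilon$ to a unit scalar times $v_{\epsilon'}$ modulo terms of strictly smaller filtration, where $\epsilon \mapsto \epsilon'$ is the bijection of root‑assignments induced by the bijection of components recorded by $C$. Hence the associated graded of $\phi_C$ permutes a basis up to units and is an isomorphism, so $\phi_C$ itself is a filtered isomorphism; being of filtration degree $0$ it preserves the top and bottom filtration degrees of every class, and therefore carries the data computing $s_n(L_0)$ onto the data computing $s_n(L_1)$, whence $s_n(L_0) = s_n(L_1)$. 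Equivalently one can note that $\phi_{\overline{C}} \circ \phi_C = \phi_{\overline{C} \circ C}$ is the cobordism map of a self‑concordance of $L_0$, nonzero on each canonical generator, forcing $\phi_C$ to be injective, and symmetrically surjective.

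The step I expect to be the main obstacle is the leading‑order computation of $\phi_C$ on the canonical generators, carried out together with the verification that the perturbed $sl(n)$ cobordism maps are well defined up to sign (movie‑move invariance); this is what makes the argument rigorous, whereas the Euler‑characteristic shift formula and its vanishing for a concordance are routine. One should also be slightly careful that the concordance is assumed to respect the labeling of components of $L_0$ and $L_1$, so that the induced map on root‑assignments is genuinely a bijection.
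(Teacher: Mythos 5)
Your strategy matches the paper's in outline: present the concordance $C$ as a movie of elementary cobordisms, obtain an induced map of filtered degree $(1-n)\chi(C)=0$ on the perturbed homology sending the canonical generator $[g^{D_0}_i]$ to a nonzero multiple of $[g^{D_1}_i]$, and run the same argument for the reversed concordance. The paper packages both directions as two applications of Lemma~\ref{lem2}, which your proposal essentially re-derives. A few points are worth tightening, though. First, the homology-level cobordism maps carry a canonical generator to an \emph{exact} nonzero scalar multiple of a canonical generator, not merely to one ``modulo terms of strictly smaller filtration''; and for the definition of $s_n$ only the constant assignment $\psi\equiv\xi^i$ matters, so the $n^l$-element bookkeeping and the permutation $\epsilon\mapsto\epsilon'$ are unneeded (the constant assignment is fixed by any bijection of components). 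Second, the inference that a degree-$0$ filtered map with invertible associated graded ``preserves the top and bottom filtration degrees of every class'' silently uses that the inverse is also filtered of degree $0$; this is true here (bounded exhaustive filtration on a finite-dimensional space), but the paper's route of simply applying the one-sided inequality of Lemma~\ref{lem2} once to $\Sigma$ and once to the flipped cobordism $\Sigma'$ is cleaner and avoids invoking any isomorphism claim. Finally, you flag the leading-order computation of $\phi_C$ on canonical generators as the main obstacle, which is right, but note that the difficulty is specifically the $0$-handle maps (each sends $g$ to $g\otimes 1$, a sum over unknot generators); the paper resolves this via Lobb's ordering in Theorem~\ref{1.6}, which puts all $0$-handles first so that the subsequent fusion $1$-handles collapse the sum back to a single canonical generator. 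Movie-move invariance, which you mention, is not actually needed: one only requires a single movie presentation with the right properties, not that the induced map is independent of the presentation.
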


\begin{thm} \label{thm3}Let $l$ be the number of components of a link $L$. $\bar{L}$ denotes a mirror of a link $L$. Let $L_1, L_2$ also be links. $L_1 \#_{i_1 = i_2} L_2$ is obtained from $L_1$ and $L_2$ by connecting  the $i_1$th component of $L_1$ and the $i_2$th component of $L_2$. Then,
\begin{align}
\label{union}s_n (L_1 \sqcup L_2) = s_n (L_1) + s_n(L_2) + n-1 \\
\label{connected}s_n(L_1) + s_n(L_2)  = s_n (L_1 \#_{i_1 = i_2} L_2)  \\
\label{mirror}  0 \le s_n (L) + s_n(\bar{L}) \le (2l-2)(n-1).
\end{align}
\end{thm}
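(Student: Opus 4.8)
The three identities all come from the algebra of the perturbed $sl(n)$ homology. Write $G_n(L)$ for that homology (the $E_\infty$-page of Gornik's spectral sequence for $L$), and let $q_{\max}(L)$, $q_{\min}(L)$ be the top and bottom filtration gradings carried by nonzero classes. The plan is to use three structural inputs, each of which already holds on the Koszul/matrix-factorization complex and is preserved by the Gornik perturbation: (i) the complex of a split diagram is a tensor product, so $G_n(L_1\sqcup L_2)\cong G_n(L_1)\otimes_{\mathbb C}G_n(L_2)$ as filtered vector spaces; (ii) joining two components by a band exhibits $G_n(L_1\#_{i_1=i_2}L_2)$ as the tensor product $G_n(L_1)\otimes_{G_n(U)}G_n(L_2)$ over the ground algebra $G_n(U)\cong\mathbb C[x]/(p(x))$, which is \emph{semisimple} because Gornik's potential has $n$ distinct critical points, the two module structures being induced by the basepoint actions on the $i_1$th and $i_2$th components; and (iii) the complex of the mirror $\bar L$ is the dual complex, giving a nondegenerate pairing $G_n(L)\otimes G_n(\bar L)\to\mathbb C$ of filtration degree $0$ that identifies $\operatorname{gr}^jG_n(\bar L)$ with $(\operatorname{gr}^{-j}G_n(L))^{\ast}$. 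Throughout I take the normalization of Corollary \ref{str}, so that $s_n(L)=q_{\max}(L)-(n-1)$ (for a knot this agrees with Lobb's $s_n$, since there $q_{\max}=q_{\min}+2(n-1)$).

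For \eqref{union}, input (i) makes filtration gradings additive under disjoint union, so $q_{\max}(L_1\sqcup L_2)=q_{\max}(L_1)+q_{\max}(L_2)$; substituting $q_{\max}=s_n+(n-1)$ and solving leaves exactly one surplus $(n-1)$, which is the discrepancy between $s_n$ of the split unlink and the naive sum recorded by Corollary \ref{str}. For \eqref{connected}, since $G_n(U)\cong\mathbb C^{\,n}$ is semisimple, $G_n(L_j)$ splits under its $i_j$th-component action into $n$ eigenspaces and the module tensor product collapses accordingly; the content is to check on the associated graded that the composite $(\text{merge})$ then $(\text{split})$ along the band is a genuine \emph{filtered} isomorphism — equivalently that the handle element of Gornik's Frobenius algebra has nonzero top term — so that $q_{\max}(L_1\#_{i_1=i_2}L_2)=q_{\max}(L_1\sqcup L_2)-(n-1)$. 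Combined with \eqref{union} this gives $q_{\max}(L_1\#_{i_1=i_2}L_2)=q_{\max}(L_1)+q_{\max}(L_2)-(n-1)$, i.e.\ $s_n(L_1\#_{i_1=i_2}L_2)=s_n(L_1)+s_n(L_2)$; the surplus $(n-1)$ of the split union is cancelled by the band. (Taking one factor to be an unknot recovers the sanity checks $s_n(L\#U)=s_n(L)$ and $s_n(L\sqcup U)=s_n(L)+(n-1)$.)

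For \eqref{mirror}, input (iii) forces $q_{\max}(\bar L)=-q_{\min}(L)$ exactly, whence $s_n(L)+s_n(\bar L)=\bigl(q_{\max}(L)-q_{\min}(L)\bigr)-2(n-1)=:w(L)-2(n-1)$, so the left inequality is equivalent to $w(L)\ge 2(n-1)$ and the right one to $w(L)\le 2l(n-1)$, where $w(L)$ is the filtration width of $G_n(L)$. For the lower bound I would use that the basepoint action $X_i$ on one component satisfies $p(X_i)=0$ with $\gcd(p(x),x^{n-1})=1$ and $G_n(L)\ne 0$, which forces $X_i^{\,n-1}\ne 0$ on $G_n(L)$; a filtered refinement (tracing the leading term of $X_i^{\,n-1}$) then produces two nonzero classes $2(n-1)$ apart in filtration. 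For the upper bound I would analyze the canonical Gornik generators of $G_n(L)$, indexed by the $n^{l}$ colourings of the components: recolouring a single component alters the filtration data only through operations of degree at most the width $2(n-1)$ of $G_n(U)$, and since these generators span $G_n(L)$, one bounds the width of their span by $2l(n-1)$. The unlink shows the upper bound is sharp, and the Hopf and positive torus links show the lower bound is as well.

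The routine steps are (i) and the unlink normalization. The main obstacle is the filtration bookkeeping in \eqref{connected} and in the upper half of \eqref{mirror}: in the former one must verify that the merge--split composite is a \emph{filtered} isomorphism rather than merely a filtered map, i.e.\ that the associated graded of the handle element is the top power of $x$; in the latter one must pin down precisely how far linear combinations of the canonical generators can spread in filtration as components are recoloured — an $sl(n)$, perturbed-homology version of the grading estimates that in Khovanov theory go back to Lee and to Beliakova--Wehrli, specializing to Lobb's theorem that $w=2(n-1)$ for knots.
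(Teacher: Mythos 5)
Your argument rests on a redefinition that is not what the paper uses and that you do not justify: you take $s_n(L)=q_{\max}(L)-(n-1)$, where $q_{\max}$ is the top filtration level of a nonzero class in $HKh'(L)$, whereas Definition~\ref{defsn} sets $s_n(L)=\qgr([g^D_i])-(n-1)$ for the \emph{particular} constant-colouring canonical generator $g^D_i$. Lemma~\ref{obs} only identifies $\qgr([g^D_i])$ with the maximum over the $n$-dimensional span of the $h^D_j$ (equivalently of the constant-$\psi$ generators), not over all of $HKh'(L)$, which contains the remaining $n^l-n$ canonical generators $g^D_\psi$ with non-constant $\psi$. For a knot these coincide (this is a theorem of Lobb), and for split unions of knots they coincide by the tensor computation, but for a general $l$-component link there is no such assertion in the paper, and your subsequent derivations of \eqref{union}--\eqref{mirror} all quietly depend on it. You would need to either prove $\qgr([g^D_i])=q_{\max}(L)$ for all links, or rewrite the argument directly in terms of $\qgr([g^D_i])$.

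The second, independent problem is that the workhorse of the paper's proof of \eqref{connected} and the lower bound in \eqref{mirror} is Lemma~\ref{prop1}: using the $\mathbb{Z}/2n\mathbb{Z}$-grading on $H(D_\oo)$ (Equation~\eqref{qgr}) to produce, inside the span of the $g^D_j$, an element $h^D$ with $\qgr([h^D])\le s_n(L)-n+1$. Feeding $[g^{D_1}_i]\otimes[h^{D_2}]$ into the band cobordism gives the hard inequality $s_n(L_1)+s_n(L_2)\ge s_n(L_1\#_{i_1=i_2}L_2)$, and feeding $[h^{D}]\otimes[g^{\bar D}_i]$ into the $l$-fusion cobordism gives $s_n(L)+s_n(\bar L)\ge 0$. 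Your proposal replaces this with two different and unestablished devices: a relative tensor product $G_n(L_1)\otimes_{G_n(U)}G_n(L_2)$ over the Frobenius algebra together with the claim that the merge--split composite is a filtered \emph{isomorphism}, and a filtration-degree-$0$ duality pairing $G_n(L)\otimes G_n(\bar L)\to\mathbb{C}$ together with width bounds $2(n-1)\le w(L)\le 2l(n-1)$ (the lower one via nontriviality of $X_i^{n-1}$). These are genuinely different routes, but none of the three claims is proved here or cited from the literature in the Khovanov--Rozansky setting, and you yourself flag the filtration bookkeeping as the ``main obstacle'' without closing it. As written the proposal reduces \eqref{connected} and \eqref{mirror} to statements that are at least as hard as what is being proved; the paper's $\mathbb{Z}/2n\mathbb{Z}$-grading trick is precisely the mechanism that makes those steps elementary, and it is missing from your argument.

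The treatment of \eqref{union} via the Künneth-type isomorphism $HKh'(D_1\sqcup D_2)\cong HKh'(D_1)\otimes HKh'(D_2)$ is essentially the paper's Proposition~\ref{prop2} and is fine modulo the first issue.
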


 With these properties, we study applications of $\left\{s_n \right\}$ on other link invariants. We compute the slice genus for positive links by using Theorem \ref{main}. 

\begin{thm}\label{positive}
 For a positive link $L$, $s_n(L) = (1-n)(c-r+1)$, where $r$ the number of components of the oriented resolution and $c$ the number of crossings for a link diagram of $L$. Moreover, Theorem \ref{main} implies $g_4 (L) = g_3 (L) = \frac{2-(r-c+l)}{2}$, where $g_3 (L)$ denotes a minimal genus of the Seifert surface of $L$.
\end{thm}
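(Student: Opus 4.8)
The strategy is to sandwich $s_n(L)$ between two matching values and then read off the genus identity. Fix a positive diagram $D$ of $L$; write $c$ for its number of crossings and $r$ for its number of Seifert circles, and let $F_S$ be the surface produced by Seifert's algorithm on $D$, so that $\partial F_S = L$ and $\chi(F_S) = r - c$ (it is assembled from $r$ disks and $c$ bands). Pushing $F_S$ into $B^4$ and applying the right-hand bound of Theorem \ref{main}, which does not require $F$ to be connected, gives at once
$$ s_n(L) \;\ge\; (n-1)\bigl(\chi(F_S) - 1\bigr) \;=\; (n-1)(r - c - 1) \;=\; (1-n)(c - r + 1). $$

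For the reverse inequality I would establish the $sl(n)$, link-level analogue of the slice--Bennequin inequality underlying Lobb's and Wu's knot computations: for every diagram $D$ of $L$,
$$ s_n(L) \;\le\; (1-n)\bigl(w(D) - r(D) + 1\bigr), $$
$w(D)$ the writhe. The argument follows Lobb--Wu: in the filtered perturbed $sl(n)$ chain complex of $D$ one examines the canonical Gornik generator $\mathfrak{z}_{\oo}$ attached to the oriented resolution of $D$ for the orientation $\oo$ of $L$ — a tensor of distinguished generators over the $r$ Seifert circles, with the usual checkerboard signs; $\mathfrak{z}_{\oo}$ (together with its orientation-reversed companion) is a cycle representing a nonzero class in the perturbed homology, and a computation of its filtration level, which involves only the grading-shift contributions of the $c$ crossings and of the $r$ circle generators, yields the displayed bound. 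Applied to the positive diagram $D$, where $w(D) = c$, this reads $s_n(L) \le (1-n)(c - r + 1)$, and together with the lower bound we conclude $s_n(L) = (1-n)(c-r+1)$. For a split positive link one may instead induct on the split pieces via \eqref{union}, since both $s_n$ and $(1-n)(c-r+1)$ are additive under $\sqcup$ up to the constant $n-1$.

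The genus statement then follows by a short sandwich, for a connected positive diagram. Put $m := r - c + l$. From $s_n(L) = (1-n)(c-r+1)$ and the \emph{in particular} clause of Theorem \ref{main} we get $c - r + 1 = |s_n(L)/(1-n)| \le 2g_4(L) + l - 1$, hence $g_4(L) \ge \tfrac{2-m}{2}$. Conversely $g_4(L) \le g_3(L) \le g(F_S)$, and $g(F_S) = \tfrac{2 - \chi(F_S) - l}{2} = \tfrac{2-(r-c)-l}{2} = \tfrac{2-m}{2}$ because $F_S$ is a connected Seifert surface for $L$. So $g_4(L) = g_3(L) = \tfrac{2-(r-c+l)}{2}$; in particular $F_S$ is genus-minimizing and $c-r+1$ is independent of the chosen positive diagram, consistently with its being $-s_n(L)/(n-1)$. (If $D$ is disconnected the same argument runs through with the $2$ in the numerator replaced by $2m_0$, where $m_0$ is the number of split components.)

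The crux is the upper bound, i.e.\ the slice--Bennequin inequality for $s_n$ of links. It rests on two standard but non-formal points: that the canonical Gornik generator of the oriented resolution is a cycle surviving to the perturbed homology, and that its filtration level, once all the grading-shift conventions are pinned down, is exactly $(1-n)(w(D)-r(D)+1)$ — and on checking that these arguments carry over unchanged when $L$ has several components. By contrast, the lower bound is an immediate application of Theorem \ref{main}, and the genus identity is a two-step comparison once the value of $s_n(L)$ is known.
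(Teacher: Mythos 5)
Your proof is correct, but it reaches the formula by a genuinely different route from the paper's. The paper computes $\qgr([g^D_i])$ exactly: for a positive diagram the oriented resolution $D_{\oo}$ sits at the minimum homological grading, so there is no incoming differential and $g^D_i$ is the unique chain representative of its class; hence $\qgr([g^D_i])$ equals the quantum degree of the top monomial $x_1^{n-1}\cdots x_r^{n-1}$, namely $(1-n)(c-r)$, and $s_n(L)=(1-n)(c-r+1)$ follows in one stroke. You instead sandwich $s_n(L)$: the lower bound comes from Theorem \ref{main} applied to the pushed-in Seifert surface (with $\chi(F_S)=r-c$), and the upper bound is a Bennequin-type inequality $s_n(L)\le(1-n)(w(D)-r(D)+1)$ for arbitrary diagrams. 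The latter need not be flagged as a non-formal ``crux'': since $g^D_i$ is a cycle representing the class, $\qgr([g^D_i])$ is bounded above by the chain-level filtration degree of $g^D_i$, which a short grading count (the same as the paper's, but with $w(D)$ in place of $c$) shows to be $(1-n)(w(D)-r(D))$; subtracting $n-1$ then gives your inequality for any diagram. (Your wording ``its filtration level \ldots is exactly $(1-n)(w(D)-r(D)+1)$'' is slightly off: the chain-level degree is $(1-n)(w(D)-r(D))$, the $+1$ appears only after the $-n+1$ normalization in the definition of $s_n$, and for non-positive diagrams the filtration level in homology can be strictly smaller.) The trade-off: the paper's no-incoming-differential observation pins down the value in a single step without invoking the cobordism machinery of Theorem \ref{main}, whereas your sandwich produces an upper bound valid for \emph{all} diagrams of $L$ rather than only positive ones. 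The closing genus computation is the same two-sided comparison $c-r+1\le 2g_4(L)+l-1\le 2g_3(L)+l-1\le c-r+1$ that the paper uses.
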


Then the corollary follows naturally.
\begin{cor} For torus links $T(p,q)$, $p > 0, q > 0$, 
\begin{center}
$\displaystyle g_4 (T(p,q)) = \frac{(p-1)(q-1)+1-\text{gcd} (p,q)}{2}$,
\end{center}
where $gcd(p,q)$ denotes the greatest common divisor of $p$ and $q.$ 
\end{cor}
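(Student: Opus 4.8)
The plan is to deduce the corollary directly from Theorem \ref{positive} by realizing $T(p,q)$ as the closure of the positive braid $(\sigma_1\sigma_2\cdots\sigma_{p-1})^q$ on $p$ strands, which is a positive diagram $D$ to which Theorem \ref{positive} applies. First I would record the three combinatorial quantities attached to $D$. The crossing number is $c=(p-1)q$. The oriented resolution of $D$ is the standard braid resolution, namely $p$ nested circles, one for each strand, so $r=p$. Finally, the number of components of $T(p,q)$ is $l=\gcd(p,q)$, since the underlying permutation of the braid is the $q$-th power of a $p$-cycle, whose number of orbits is $\gcd(p,q)$.

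Substituting these values into the formula of Theorem \ref{positive}, $g_4(L)=g_3(L)=\frac{2-(r-c+l)}{2}$, gives
$$g_4(T(p,q))=\frac{2-\bigl(p-(p-1)q+\gcd(p,q)\bigr)}{2}=\frac{(p-1)(q-1)+1-\gcd(p,q)}{2},$$
where the last equality is the elementary rearrangement $2-p+(p-1)q-\gcd(p,q)=pq-p-q+1+1-\gcd(p,q)=(p-1)(q-1)+1-\gcd(p,q)$.

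Essentially all of the content sits in Theorem \ref{positive}; the only points that need checking here are that the standard torus-link diagram is positive (immediate, being a positive braid closure) and that its oriented resolution really consists of $p$ circles. The one place I would be careful is the orientation convention: $T(p,q)$ must be given the orientation in which all $\gcd(p,q)$ components run coherently around the braid axis, so that Seifert's algorithm on $D$ produces the $p$ braid-strand circles and the diagram is genuinely positive; with a non-coherent orientation the diagram need not be positive and Theorem \ref{positive} would not apply. As a sanity check I would note that the resulting value coincides, as it must, with the classical Seifert genus of a positive braid closure computed from $\chi=r-c$ together with $\chi=2-2g-l$, which recovers the same arithmetic independently.
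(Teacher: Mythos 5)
Your proof is correct and is exactly the argument the paper intends; the paper offers no written proof (it just says the corollary ``follows naturally'' from Theorem \ref{positive}), and the intended route is precisely the one you take: realize $T(p,q)$ as the closure of the positive braid $(\sigma_1\cdots\sigma_{p-1})^q$ on $p$ strands, read off $c=(p-1)q$, $r=p$, $l=\gcd(p,q)$, and substitute into $g_4=\frac{2-(r-c+l)}{2}$. Your remark about the coherent orientation being necessary for positivity is a sensible precaution, though it is the standard convention for positive torus links.
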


 From \cite{batsonseed}, the splitting number of a link $sp(L)$ is defined by the minimal number of the crossing changes between different components, to make the link split. We want to show that $s_n (L)$ also gives a bound on the link splitting number $sp(L)$.
\begin{thm}\label{change} Let $L_{+}$ be a link in $S^3$ with a marked positive crossing. $L_{-}$ is the link obtained from $L_{+}$ by changing the marked positive crossing to a negative crossing. Then, 
 $$|s_n (L_{+}) - s_n (L_{-})| \le 2(n-1).$$
\end{thm}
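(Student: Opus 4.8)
The plan is to exhibit an explicit oriented cobordism from $L_+$ to $L_-$ of Euler characteristic $-2$ and then apply the cobordism estimate underlying Theorem \ref{main}. First, observe that the oriented resolution of a diagram at a crossing depends only on the orientations of the four local strand-ends, not on which strand passes over; hence $L_+$ and $L_-$, which differ only in the over/under datum at the marked crossing, have one and the same oriented resolution $L_0$ (in a braid-like picture both $\sigma_i$ and $\sigma_i^{-1}$ resolve to the trivial tangle $\parallel$). Resolving the marked crossing of $L_+$, resp.\ of $L_-$, is a single oriented band (saddle) move, so there are oriented cobordisms $S_+\colon L_+\to L_0$ and $S_-\colon L_-\to L_0$, each built from one band and each with Euler characteristic $-1$. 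Composing $S_+$ with the reverse of $S_-$ produces an oriented cobordism $\Sigma\colon L_+\to L_-$ with $\chi(\Sigma)=-2$. Since $\Sigma$ is a product away from a small ball around the marked crossing, its connected component carrying the two bands is a four-holed sphere when the marked crossing joins two distinct components of $L_\pm$ and a genus-one surface with two boundary circles when it is a self-crossing, while every other component of $\Sigma$ is an annulus; in particular $\Sigma$ has no closed components, each component of $\Sigma$ meets both $L_+$ and $L_-$, and $\chi(\Sigma)=-2$ in either case.

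Second, I would invoke the cobordism inequality that the proof of Theorem \ref{main} already provides: an oriented cobordism $\Sigma$ from $L$ to $L'$ with no closed components and with each component meeting both ends induces a filtered map on perturbed $sl(n)$ homology whose shift on the relevant filtration level is controlled by $(1-n)\chi(\Sigma)$, so that $|s_n(L)-s_n(L')|\le (1-n)\chi(\Sigma)$. (If one prefers to use Theorem \ref{main} only as stated, the alternative is to treat the two bands separately: cap each single-band cobordism $S_\pm$ off against a slice surface for $L_0$ and play the lower bound of Theorem \ref{main} against its upper bound to obtain $|s_n(L_\pm)-s_n(L_0)|\le n-1$, then add.) With $\chi(\Sigma)=-2$ this yields $|s_n(L_+)-s_n(L_-)|\le 2(n-1)$, which is the assertion of Theorem \ref{change}.

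The topological input is elementary; the real work is the cobordism inequality in the \emph{link} setting, where one must verify that the filtration shift interacts correctly with the $(n-1)(l-1)$-type normalization distinguishing $s_n$ of an $l$-component link from its knot analogue, and that closed components and components missing one of the two ends are correctly accounted for (this is precisely why one routes the crossing change through the oriented resolution $L_0$, keeping every cobordism orientable, rather than realizing it by a single half-twisted band, which would be non-orientable and outside the reach of the oriented functoriality). Once these conventions are aligned with those used in the proof of Theorem \ref{main}, the bound $-\chi(\Sigma)=2$ gives the result uniformly in the self-crossing and the two-component cases, and for $n=2$ it recovers the classical crossing-change bound $|s(L_+)-s(L_-)|\le 2$ for Rasmussen's invariant.
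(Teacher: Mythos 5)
You correctly identify the cobordism: resolving the marked crossing of $L_+$ to the oriented resolution $L_0$ is a single band move, likewise for $L_-$, and composing gives an oriented cobordism $\Sigma\colon L_+\to L_-$ with $\chi(\Sigma)=-2$; this is exactly the surface the paper obtains by smoothing the double point of the singular concordance. The gap is that you then invoke a general two-sided ``cobordism inequality'' $|s_n(L)-s_n(L')|\le(1-n)\chi(\Sigma)$ as a black box. No such unconditional statement exists in the paper: Lemma~\ref{lem2} is conditional, giving only the one-sided bound $s_n(L_+)-(n-1)\chi(\Sigma)\ge s_n(L_-)$ \emph{provided} $HKh'(\Sigma)$ sends $[g^{D_+}_i]$ to a nonzero multiple of $[g^{D_-}_i]$. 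Verifying that hypothesis is the actual content of the paper's proof, and it is done by decomposing the nontrivial piece of $\Sigma$ into one fission followed by one fusion (self-crossing) or one fusion followed by one fission (two distinct components), then using the facts reviewed in Section~3.1 that both fusion and fission 1-handle maps send the constant-$\psi$ generator $[g^D_i]$ to a nonzero multiple of the corresponding generator -- in particular, the fusion map is nonzero here precisely because the two merging components carry the same $\psi$-value. Applying Lemma~\ref{lem2} to $\Sigma$ and to the reversed cobordism then yields the two inequalities whose combination is the stated bound. You explicitly flag this verification as ``the real work'' but do not carry it out, and your suggested alternative -- capping $S_\pm$ against a slice surface for $L_0$ and playing the two sides of Theorem~\ref{main} against each other -- does not obviously close the gap: $L_0$ need not be slice, and the bounds of Theorem~\ref{main} depend on the number of connected components of the spanning surface, which do not cancel in the way you would need. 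The cleanest repair is simply to observe that each of your two band cobordisms is either a fusion or a fission and hence preserves the constant-$\psi$ canonical generators, so Lemma~\ref{lem2} applies to $\Sigma$ and to $\bar\Sigma$.
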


\begin{thm}\label{splitting} Let $L  = L_1 \cup \cdots \cup L_l$ be an $l$ component link in $S^3$. Each $L_i$ is a knot for $i=1,2, \cdots, l$. Then $$|s_n (L) - \sum_{i=1}^l s_n (L_i)- (n-1)(l-1)| \le 2 (n-1) sp(L).$$.
\end{thm}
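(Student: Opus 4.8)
The plan is to connect $L$ to the totally split link $L_1 \sqcup \cdots \sqcup L_l$ by a shortest sequence of crossing changes realizing $sp(L)$, to estimate the change of $s_n$ at each step via Theorem \ref{change}, and to evaluate $s_n$ on the split link using equation \eqref{union} of Theorem \ref{thm3}.

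Concretely, put $m = sp(L)$ and, following the definition of the splitting number in \cite{batsonseed}, fix a sequence $L = M_0, M_1, \ldots, M_m$ of $l$-component links in which $M_{j+1}$ is obtained from $M_j$ by a single crossing change between two distinct components, with $M_m = L_1 \sqcup L_2 \sqcup \cdots \sqcup L_l$. I would first observe that Theorem \ref{change} is insensitive to the direction of the crossing change: if $M_{j+1}$ comes from $M_j$ by switching a positive crossing, then $(M_j, M_{j+1})$ plays the role of $(L_+, L_-)$, while if a negative crossing is switched then $(M_{j+1}, M_j)$ does; either way $|s_n(M_j) - s_n(M_{j+1})| \le 2(n-1)$. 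Applying the triangle inequality along the sequence gives
$$|s_n(L) - s_n(L_1 \sqcup \cdots \sqcup L_l)| \le \sum_{j=0}^{m-1}|s_n(M_j) - s_n(M_{j+1})| \le 2(n-1)\, sp(L).$$
A routine induction on $l$ using \eqref{union} shows $s_n(L_1 \sqcup \cdots \sqcup L_l) = \sum_{i=1}^l s_n(L_i) + (l-1)(n-1)$; substituting this into the inequality above yields the assertion.

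The argument is essentially formal once Theorems \ref{thm3} and \ref{change} are in hand, so I do not expect a serious obstacle. The only point needing attention is the elementary bookkeeping that every intermediate $M_j$ is genuinely an $l$-component link with the same components $L_1, \ldots, L_l$: a crossing change between two distinct components neither merges nor creates components and leaves the knot type of each individual component unchanged, so Theorem \ref{change} legitimately applies at every stage and the sequence terminates at the desired split link. It is worth remarking that, read in the contrapositive, the inequality furnishes the lower bound $sp(L) \ge \frac{1}{2(n-1)}\bigl|s_n(L) - \sum_{i=1}^l s_n(L_i) - (n-1)(l-1)\bigr|$, which is the form used in the application to positive torus links.
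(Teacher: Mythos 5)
Your proof is correct and follows essentially the same route as the paper's: apply Theorem \ref{change} along a minimal sequence of crossing changes and the triangle inequality to bound $|s_n(L) - s_n(L_1 \sqcup \cdots \sqcup L_l)|$ by $2(n-1)\,sp(L)$, then evaluate the split link via the additivity formula (you cite \eqref{union} of Theorem \ref{thm3}, the paper cites Proposition \ref{prop2}, but these amount to the same computation). Your write-up is somewhat more explicit about the bookkeeping, but the argument is the same.
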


 Moreover, we will compute the splitting number of positive torus links using Theorem \ref{splitting}.
\begin{cor}  Let $p,q>0$ be coprime. Let $L$ be a positive torus link $T(lp, lq)$ for a positive integer $l$. Then $$sp(L) = \frac{l(l-1)}{2} pq .$$
\end{cor}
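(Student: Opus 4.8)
The plan is to prove the two inequalities $sp(L)\ge \frac{l(l-1)}{2}pq$ and $sp(L)\le \frac{l(l-1)}{2}pq$ separately. Throughout write $L=T(lp,lq)$. Since $\gcd(p,q)=1$ we have $\gcd(lp,lq)=l$, so $L$ has $l$ components $L_1,\dots,L_l$, each a copy of the torus knot $T(p,q)$, and $L$ carries the evident cyclic symmetry permuting them; any two components are parallel $(p,q)$-curves on the standard torus, hence $\mathrm{lk}(L_i,L_j)=pq$.

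For the lower bound I would feed the $s_n$-values of $L$ and of its components into Theorem~\ref{splitting}. Applying Theorem~\ref{positive} to the standard positive braid diagram $T(a,b)=\widehat{(\sigma_1\cdots\sigma_{a-1})^b}$, which has $c=b(a-1)$ crossings and whose oriented resolution is a disjoint union of $r=a$ circles, gives $s_n\bigl(T(a,b)\bigr)=(1-n)(c-r+1)=(1-n)(a-1)(b-1)$; in particular $s_n(L_i)=(1-n)(p-1)(q-1)$ and $s_n(L)=(1-n)(lp-1)(lq-1)$. A short simplification then shows $s_n(L)-\sum_{i=1}^{l}s_n(L_i)-(n-1)(l-1)=-(n-1)\,l(l-1)pq$, so Theorem~\ref{splitting} yields $(n-1)\,l(l-1)pq\le 2(n-1)\,sp(L)$, i.e.\ $sp(L)\ge \frac{l(l-1)}{2}pq$ (note $l(l-1)$ is even, so the bound is an integer).

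For the upper bound I would exhibit an explicit sequence of $\frac{l(l-1)}{2}pq$ crossing changes, each between two distinct components, converting $L$ into the split union $T(p,q)\sqcup\cdots\sqcup T(p,q)$ of its components. The natural route is induction on $l$. Viewing $L$ as $l$ parallel, torus-framed (hence pairwise $pq$-linked) copies of $K=T(p,q)$, single out one copy $C$; then in the standard positive diagram $C$ meets the other $l-1$ copies in exactly $2(l-1)pq$ crossings, all positive, and I claim that switching a carefully chosen half of them — $(l-1)pq$ of these crossings — unlinks $C$ from the rest, so that $C$ may be isotoped into a ball disjoint from everything else (as an unknotted-and-unlinked $T(p,q)$) while the complementary link becomes $T((l-1)p,(l-1)q)$. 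Since no crossing change inside the split-off $T(p,q)$ is ever needed and $\binom{l-1}{2}+(l-1)=\binom{l}{2}$, the inductive hypothesis gives $sp(L)\le (l-1)pq+\frac{(l-1)(l-2)}{2}pq=\frac{l(l-1)}{2}pq$. Combining the two bounds yields the corollary.

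The lower-bound step is a routine computation once Theorems~\ref{splitting} and~\ref{positive} are in hand (indeed the pairwise linking numbers alone already force it); the real work is the upper bound, where one must pin down precisely which crossings between $C$ and the remaining components to switch so that the output is honestly split — vanishing of all pairwise linking numbers is necessary but not by itself sufficient — and verify the count is exactly $(l-1)pq$ at each stage. A convenient concrete model for this is to present $L$ as the blackboard cable of the standard braid diagram of $T(p,q)$ with the appropriate number of full twists inserted on the $l$ cabling strands, and to unwind the cabling strands one at a time; the matching lower bound then certifies that this construction is optimal.
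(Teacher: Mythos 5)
Your proposal is correct and follows essentially the same strategy as the paper: the lower bound from Theorems~\ref{splitting} and~\ref{positive} applied to $T(lp,lq)$ and its $T(p,q)$ components, and the upper bound by induction on $l$, splitting off one component at a time with $(l-1)pq$ crossing changes. The paper makes the crossing-change rule explicit --- change exactly those crossings where the singled-out component passes under, so that it lifts entirely above the rest of the diagram and is visibly split, with the remaining components forming $T((l-1)p,(l-1)q)$ --- which is precisely the ``carefully chosen half'' you leave open.
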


 When $\mathcal{L}$ is a link and $o$ denotes the orientation on the link $\mathcal{L}$, Beliakova and Wehrli defined an oriented link invariant $s(\mathcal{L},o)$ in \cite[Section 6]{beliakova} from Lee's homology. For an oriented link $L \subset S^3$, we denote this link invariant $s(L)$. We show that Beliakova and Wehrli's link invariant $s$ is equivalent to $-s_2$.

\begin{thm}\label{equal}
For an oriented link L, $s(L)$ is equivalent to $-s_2 (L)$, which will be defined in Definition \ref{defsn}.
\end{thm}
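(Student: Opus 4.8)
The plan is to exhibit both invariants as data read off the \emph{same} filtered chain complex, so that the identity $s(L)=-s_2(L)$ becomes a matter of matching conventions. The first step is to recall the comparison between the $n=2$ specialisation of the perturbed Khovanov--Rozansky theory used in Definition~\ref{defsn} and Lee's perturbed Khovanov theory \cite{lee}. For $n=2$, the Khovanov--Rozansky $sl(2)$ cube of resolutions is isomorphic to Khovanov's cube up to an overall shift of the quantum grading, and Gornik's deformation of the potential $x^{3}$ to $x^{3}-3x$ \cite{gornik} corresponds to Lee's deformation of $\mathbb{C}[x]/(x^{2})$ to $\mathbb{C}[x]/(x^{2}-1)$; the deformation term has strictly lower quantum degree than the leading term, so the resulting complex carries the quantum filtration and its homology is, as a filtered vector space, Lee's homology of $L$ --- read, however, with the mirrored resolution conventions that are standard in the $sl(n)$ setting, which is the origin of the sign in $-s_2$.

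The second step is to match the distinguished classes. In Definition~\ref{defsn}, $s_2$ is extracted from the quantum filtration levels of the canonical generators of Gornik's homology attached to the orientations of $L$; under the identification above these correspond to nonzero scalar multiples of Beliakova--Wehrli's classes $\mathfrak{s}_{o}$, as one checks on the oriented resolution, where both are the corresponding tensor products of the eigenvectors $x\pm 1$ of multiplication by $x$ in $\mathbb{C}[x]/(x^2-1)$. Scaling by a unit of quantum degree $0$ does not change filtration levels, so the quantities $s_{\min}$ and $s_{\max}$ entering Beliakova--Wehrli's definition of $s(L)$ in \cite{beliakova} and the quantities entering Definition~\ref{defsn} agree up to the fixed affine shift contained in the grading normalisations, together with the overall sign. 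I would pin this down by evaluating both sides on the $l$-component unlink: Corollary~\ref{str} gives $s_2=l-1$, hence $-s_2=1-l$, which is exactly $s$ of the unlink, and I would cross-check the affine part by comparing \eqref{union} with the disjoint-union behaviour of $s$ recorded in \cite{beliakova}, and, for knots, with Rasmussen's normalisation \cite{rasmussen}.

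The only substantial work, and hence the main obstacle, is carrying out the chain-level identification of the first step with full control of signs and gradings: the $sl(n)$ literature and the Khovanov/Lee literature use different normalisations of the quantum grading, different sign conventions on the edge maps of the cube, and assign opposite roles to the two resolutions of a crossing, so the constant relating $s$ and $s_2$ must be \emph{computed} on a single crossing and then propagated by functoriality rather than guessed. Once that grading dictionary is fixed, the filtered isomorphism is promoted from one crossing to arbitrary diagrams in the usual way, the orientation classes are transported through it, and the equality $s(L)=-s_2(L)$ follows.
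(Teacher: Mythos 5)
There is a real gap. You correctly identify that the $n=2$ perturbed Khovanov--Rozansky complex is Lee's complex up to a sign flip of the quantum grading, and that the canonical generators $[g^D_0],[g^D_1]$ correspond (up to nonzero scalars) to Beliakova--Wehrli's orientation classes $\mathbf{s}_o,\mathbf{s}_{\bar o}$. But after that identification is in place, the proof is \emph{not} just a matter of matching conventions and calibrating an affine constant on the unlink, because the two invariants are defined by structurally different formulas. Definition~\ref{defsn} reads off the filtration level of a \emph{single} canonical generator, $s_2(L)=\qgr([g^D_0])-1$, whereas Beliakova--Wehrli's $s(L)$ is built from the filtration levels of the two combinations $\mathbf{s}_o+\mathbf{s}_{\bar o}$ and $\mathbf{s}_o-\mathbf{s}_{\bar o}$ (equivalently, $\tfrac12(s_{\min}+s_{\max})$). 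A grading-reversing filtered isomorphism carrying $[g_0]\mapsto c_0\mathbf{s}_o$ and $[g_1]\mapsto c_1\mathbf{s}_{\bar o}$ gives you $\qgr([g_0])=-\deg(\mathbf{s}_o)$, but it does not by itself relate $\deg(\mathbf{s}_o)$ to $\max\bigl(\deg(\mathbf{s}_o+\mathbf{s}_{\bar o}),\deg(\mathbf{s}_o-\mathbf{s}_{\bar o})\bigr)$. Your remark that ``the quantities $s_{\min}$ and $s_{\max}$ entering Beliakova--Wehrli's definition \dots agree \dots with the quantities entering Definition~\ref{defsn}'' is precisely the assertion that needs proof; Definition~\ref{defsn} does not produce a pair of quantities, and checking the unlink (or even the disjoint-union formula) cannot establish the equality in general, since there is no axiomatic characterisation available that reduces the comparison to a single example plus formal properties.

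The missing ingredient is the change of basis between the canonical generators $g^D_i$ and the $\mathbb{Z}/2n\mathbb{Z}$-homogeneous elements $h^D_p$. For $n=2$ one has $g_0=h_0+h_1$ and $g_1=\pm(h_0-h_1)$, so $h_0,h_1$ are (up to scalars) exactly $g_0\pm g_1$, i.e.\ the images of $\mathbf{s}_o\pm\mathbf{s}_{\bar o}$. Lemma~\ref{obs} then gives $\qgr([g_0])=\max\bigl\{\qgr([h_0]),\qgr([h_1])\bigr\}=\max\bigl\{\qgr([g_0]+[g_1]),\qgr([g_0]-[g_1])\bigr\}$, which is the bridge between the single-generator formula for $s_2$ and the max/min formula for $s$. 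Combined with the grading flip $\deg=-\qgr$ and the identity from \cite{beliakova} that the two relevant filtration levels differ by exactly $2$, this yields $s_2(L)=-s(L)$. Without this step your argument stalls at an identification of generators and does not close the comparison of the invariants.
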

 In \cite{murasugi, tristam}, the authors also give a lower bound for $2g_4(L) + l -1$ by using the Tristam-Levine signatures. However, Beliakova and Wehrli provide examples that $s(L)$ gives the stronger obstruction to the regular sliceness than Tristam-Levine signature. We may attain better obstructions on $2g_4(L) + l -1$ from $s_n$ link invariants since $\left\{ s_n \right\}$ contain more information than a single $s_2$. 

  Recently, Lobb and Lewark defined a family of knot concordance invariants which arose from both unreduced and reduced Khovanov-Rozansky cohomologies with separable potentials in \cite{lobblewark}. We expect that the results in \cite{lobblewark} also might be generalized to links. Moreover, special cases of the deformation theorem proved in \cite{rosewedrich} proposed spectral sequences starting from one perturbed homology and abutting to the other perturbed homology. By tracking the quantum filtrations of the above spectral sequences, we might obtain the relations between the invariants defined in \cite{lobb2} and \cite{lobblewark}. The author may return to these two projects in a future paper.

 The paper is organized as follows. In Section 2, we review the description of the perturbed $sl(n)$ homology. Next, in Section 3, we review how to construct a map between perturbed $sl(n)$ link homologies when a link cobordism between two links is given. In Section 4, we define $s_n$ invariants of links. In Section 5, we prove the theorems and properties of new link invariants, given in the introduction.

\section*{Acknowledgement}
 I would like to thank Professor Yi Ni for introducing this topic to me and for giving advices to me. I would also like to thank Professor Andrew Lobb, for suggesting the possible further direction. The author is partially supported from Samsung Scholarship.
\section{Perturbed $sl(n)$ homology}
 In this section, we will review a description of the perturbed $sl(n)$ homology. We follow the description in \cite{gornik, lobb1}. We consider an $l$ component link $L$ and fix a link diagram $D$ of $L$ with $N$ crossings. Then for every $v \in \left\{ 0,1 \right\} ^N$, we do a $v_i$ resolution for the $i$th crossing as in Figure \ref{12}. $D_v$ denotes the corresponding resolution. Then we call $D_v$ a link resolution. Let $\oo = (0,0 \cdots ,0) \in \left\{0,1\right\}^N$. Henceforth, we call $D_{\oo}$ the oriented resolution and $w(D)$ the writhe of $D$.

 We fix a link resolution $\Gamma$ and a degree $n+1$ polynomial $\omega (x) \in \mathbb{C} [x]$. Then we place marks on thin edges of $\Gamma$. There is a formal variable per each mark. We assign a 2-periodic complex  $C_{\omega} (\Gamma)$ by a tensor product of matrix factorizations over the polynomial ring $\mathbb{C} [ x_1, \cdots, x_k]$ for formal variables $x_1, \cdots, x_k$. We have $\mathbb{Z}/2\mathbb{Z}$-grading on $C_{\omega} (\Gamma)$ and the differential maps $d^{\omega}_0, d^{\omega}_1$ which satisfy $d^{\omega}_0 d^{\omega}_1 = d^{\omega}_1 d^{\omega}_0= 0$. It can be expressed by $$C^0_{\omega} (\Gamma) \xrightarrow{d^{\omega}_0} C^1_{\omega} (\Gamma) \xrightarrow{d^{\omega}_1} C^0_{\omega} (\Gamma).$$ See \cite{lobb1, khovanov} for the full descriptions.

\begin{rmk}\label{dis} For two disjoint link resolutions $\Gamma_1, \Gamma_2$, it is known that $$C_{\omega} (\Gamma_1 \sqcup \Gamma_2) = C_{\omega} (\Gamma_1) \otimes_{\mathbb{C}} C_{\omega} (\Gamma_2)$$ from \cite[Proposition 28]{khovanov}.
\end{rmk}
 Let $d^{\omega}$ be $d^{\omega}_0 + d^{\omega}_1$. With this total differential $d^{\omega}$, we define a homology group $H_{\omega} (\Gamma)$ of $C_{\omega} (\Gamma)$. Then, we define a quantum $\mathbb{Z}$-grading $q$ on the complex $C_{\omega} (\Gamma)$ by $q(x) = 2$ for each formal variable $x$. We say a map $f : C_{\omega} (\Gamma) \longrightarrow C_{\omega} (\Gamma)$ is {\it homogeneous with degree $k$} if $q(f(x)) = q(x) +k$.  The boundary map $d^{\omega}_0, d^{\omega}_1$ is a sum of homogeneous maps; each homogeneous summand with degree $-n-1+2k$ corresponds to the degree $k$ term of $\omega(x)$ for $k= 0,1, \cdots, n+1.$

 For $\omega(x) = x^{n+1}$, the boundary maps are homogeneous with degree $(n+1)$. Therefore, the quantum $\mathbb{Z}$-grading $q$ on the chain complex induces the quantum $\mathbb{Z}$-grading on the homology group. However, for $\omega(x)= x^{n+1} - (n+1) x$, the boundary maps are a sum of two parts: one with degree $(n+1)$ and the other with degree $(-n+1)$. Then $q$-grading on the chain complex cannot induce the quantum grading on the homology group, but can induce a $\mathbb{Z}$-filtration on the homology group \cite[Section 2]{gornik}. Recall that a filtration on $H_{\omega} (\Gamma)$ indexed by $\mathbb{Z}$ is a collection of subsets $\left\{ \mathcal{F}^i H_{\omega} (\Gamma)\right\}_{i \in \mathbb{Z}}$ such that $$\cdots \subset \mathcal{F}^j H_{\omega} ( \Gamma) \subset \mathcal{F}^{j+1} H_{\omega} (\Gamma) \subset \cdots$$ for all $j \in \mathbb{Z}$. We call this induced filtration the quantum filtration. Note that the boundary maps for $\omega = x^{n+1} - (n+1)x$ are not homogeneous with $\mathbb{Z}$-grading, but homogeneous with $\mathbb{Z}/2n\mathbb{Z}$-grading, which is naturally determined from $\mathbb{Z}$-grading. Therefore, $\mathbb{Z}/2n\mathbb{Z}$-grading can be also defined on the homology level. We will return to this $\mathbb{Z}/2n\mathbb{Z}$-grading in Section 4.
\begin{rmk}\label{rmk1}
 From the definition of ${H}_{\omega}$, it is known that $${H}_{\omega} (\text{$r$ component unlink}) = \mathbb{C} [ x_1, \cdots, x_r ] / (\omega' (x_1) , \omega' (x_2), \cdots \omega' (x_r) ) \left\{(1-n)r\right\},$$ where $\left\{-\right\}$ means a shift in the quantum filtration.
\end{rmk}

\begin{figure}
\begin{tikzpicture}
	\draw[->] (-2,-0.5)--(-1,0.5);
	\draw (-1,-0.5)--(-1.4,-0.1);
	\draw[->] (-1.6,0.1)--(-2,0.5);

	\draw[->] (2,-0.5)--(1,0.5);
	\draw (1,-0.5)--(1.4,-0.1);
	\draw[->] (1.6,0.1)--(2,0.5);

	\node at (-1.5,-1.2) {Positive crossing};
	\node at (1.5,-1.2) {Negative crossing};
\end{tikzpicture}

\vspace{4mm}
\begin{tikzpicture}
	\node at (0,0.5) {$\longrightarrow$};
	\node at (0,-0.5) {$\longleftarrow$};
	\node at (0,1) {$\chi_0$};
	\node at (0,-1) {$\chi_1$};
	\node at (-2,-2) {$\Gamma_0$};
	\node at (2,-2) {$\Gamma_1$};
	\node at (-2,1.5) {$0$ resolution};
	\node at (2,1.8) {$1$ resolution};

	\draw[->] (-3,-1) arc (-40:40:1.5);
	\draw[->] (-1,-1) arc (220:140:1.5);
	\draw[->] (2,0.5)--(1.2,1.5);
	\draw[->] (2,0.5)--(2.8,1.5);
	\filldraw (1.9,0.5)--(2.1,0.5)--(2.1,-0.5)--(1.9,-0.5);
	\draw (2,-0.5)--(1.2,-1.5);
	\draw (2,-0.5)--(2.8,-1.5);

\end{tikzpicture}
	\caption{}\label{12}
\end{figure}
 Let $\Gamma_0, \Gamma_1$ be two resolutions of the link, which locally differ as in Figure \ref{12}. We have two chain maps between the matrix factorizations
$$\chi_0 : C_{\omega} (\Gamma_0) \longrightarrow C_{\omega} (\Gamma_1)$$
$$\chi_1 : C_{\omega} (\Gamma_1) \longrightarrow C_{\omega} (\Gamma_0).$$

   Now we define an edge set on $\left\{0,1 \right\}^N$. Suppose $u,v \in \left\{0,1 \right\}^N$ satisfy $v_i - u_i = 1, u_j = v_j$ for all $j \ne i$. When the $i$ th crossing is positive, we add an edge from $u$ to $v$. Otherwise, we add an edge from $v$ to $u$. Moreover, for each edge $e : u \longrightarrow v$, $\Phi_{uv} : C_{\omega} ( D_u) \longrightarrow C_{\omega} (D_{v})$ is described by $\chi_0$ or $\chi_1$ depending on the local diagrams of $D_u$ and $D_v$. Now we define the grading $s(v)$ to be the number of $1 \le i \le N$ satisfying one of the following: 
\begin{enumerate} \item $v_i =1$ and the $i$ th crossing is positive, \item $v_i=0$  and the $i$ th crossing is negative. \end{enumerate} For every element  $x \in C_{\omega} (D_v)$, we assign the homological grading $h (x) = s(v) - \frac{1}{2} ( N + w(D)) $. Then $\Phi_{uv}$ increases the homological grading by 1.

 The chain complex is given by $$CKh^k_{\omega} (D)= \bigoplus_{s(v) = k + \frac{1}{2} (N+w(D))} H_{\omega} (D_v) \left\{ (1-n)w(D) - k \right\},$$ where $w(D)$ is the writhe of the diagram and $k$ denotes the homological grading. Note that the term $\frac{1}{2} ( n + w(D))$ is added to make the homological grading of ${H}_{\omega} (D_{\oo})$. 

 The differential is defined to be
$$\partial_{\omega} = \sum_{\substack{e:u\longrightarrow v \\u,v \in \left\{0,1\right\}^n }} \Phi_{uv}.$$ This differential map preserves the quantum filtration. The homology group of $(CKh_{\omega} (D) , \partial_{\omega})$ is denoted by $HKh_{\omega} (D)$. 

From Remark \ref{dis}, \begin{equation}\label{dis2}HKh_{\omega} (D_1 \sqcup D_2) = HKh_{\omega} (D_1) \otimes_{\mathbb{C}} HKh_{\omega} (D_2).\end{equation} for two disjoint link diagrams $D_1, D_2$.

 In \cite{wu}, it is shown that this chain equivalence class does not depend on the choice of the diagram, hence it is a link invariant. Thus, we can define the perturbed $sl(n)$ link homology $HKh_{\omega} (L)$ by $HKh_{\omega} (D)$. 

 For $\omega = x^{n+1}$, $HKh_\omega$ is the original Khovanov-Rozansky $sl(n)$ homology, constructed in \cite{khovanov}. Since $d^{\omega}_0, d^{\omega}_1$ are homogeneous, the homology group has a well-defined quantum grading. For $\omega = x^{n+1} - (n+1)x$, $HKh_{\omega}$ is equivalent to Gornik's homology, which is originally defined in \cite{gornik}. Henceforth, we use the notation $CKh'_n$ and $HKh'_n$ for $CKh_{x^{n+1}-(n+1)x}$ and $HKh_{x^{n+1}-(n+1)x}$ respectively. 

 We want to describe the canonical generators of $HKh'_n (D)$. Each generator has one-to-one correspondence with a map $\psi : \left\{\text{components of $\mathcal{L}$} \right\} \longrightarrow \left\{1, \xi, \cdots, \xi^{n-1} \right\}$ for $\xi = \frac{2 \pi i}{n}$ in the following way. 

\begin{thm}\cite[Theorem 2]{gornik}
 Let $L$ be a link and $D$ be a link diagram of $L$. For each crossing in $D$, there are two strands involved. If the two strands have the same $\psi$ values, we do a 0-resolution for the crossing. Otherwise, we do a 1-resolution for the crossing. $D_v$ denotes the corresponding link resolution. The corresponding generator lives in ${H}(D_v)$. Then the element which corresponds to $\psi$ is defined to be 
$$ g^{D}_{\psi}  := \displaystyle \prod_{e :  \text{edge}} \frac{{X_e}^n - 1}{ X_e - \psi(\text{component including $e$})}.
$$
  Now we call elements in $\left\{g^D_{\psi}| \psi : \left\{\text{components of $\mathcal{L}$} \right\} \longrightarrow \left\{1, \xi, \cdots, \xi^{n-1} \right\}\right\}$ canonical generators. 
\end{thm}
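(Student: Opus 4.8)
The statement packages two assertions: that each $g^D_\psi$ is a well-defined cycle in $CKh'_n(D)$, and that the $n^l$ resulting classes form a $\mathbb C$-basis of $HKh'_n(D)$ (equivalently, that the complex splits up to quasi-isomorphism as a sum of lines indexed by the maps $\psi$). The plan is to prove both by reducing to the semisimplicity of the deformed local algebra together with the computation for the unlink, in four steps: an eigenvalue decomposition, a local cycle check, a rank count, and a separation argument. For $\omega(x)=x^{n+1}-(n+1)x$ we have $\omega'(x)=(n+1)(x^n-1)$, whose roots are exactly $1,\xi,\dots,\xi^{n-1}$, so the local algebra $A:=\mathbb C[x]/(\omega'(x))\cong\mathbb C[x]/(x^n-1)$ is semisimple: $A=\bigoplus_\zeta \mathbb C\,\pi_\zeta$ with $\pi_\zeta=\prod_{\zeta'\neq\zeta}\frac{x-\zeta'}{\zeta-\zeta'}$ orthogonal idempotents, and $\frac{x^n-1}{x-\zeta}$ is a nonzero scalar multiple of $\pi_\zeta$. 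Consequently, for any link resolution $\Gamma$ each edge variable $X_e$ satisfies $X_e^n=1$ on $H_\omega(\Gamma)$, so the commuting operators $\{X_e\}_e$ give a simultaneous eigenspace decomposition $H_\omega(\Gamma)=\bigoplus_\phi H_\omega(\Gamma)_\phi$ indexed by labelings $\phi$ of edges by $n$-th roots of unity. For the $r$-component unlink $U_r$ this recovers Remark \ref{rmk1}: $HKh'_n(U_r)=\bigoplus_\psi \mathbb C\, g_\psi$ with $g_\psi=\prod_c \frac{X_c^n-1}{X_c-\psi(c)}$, exactly the stated formula in the case of one edge per component.

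By construction $g^D_\psi$ lives at the vertex $D_v$ obtained by $0$-smoothing each crossing whose two strands have equal $\psi$-value and $1$-smoothing the others, and it is the simultaneous $X_e$-eigenvector whose label on every edge of component $c$ is $\psi(c)$. That $\partial_\omega g^D_\psi=0$ is a local computation with the explicit maps $\chi_0,\chi_1$ of \cite{khovanov, lobb1}: these are $A$-linear in the edge variables, and along each outgoing cube edge the induced map either factors through multiplication by a difference $(X_e-X_{e'})$ of labels that disagree on $g^D_\psi$, or sends $g^D_\psi$ into a resolution over which the idempotent expression is visibly a cycle; using $\pi_\zeta\pi_{\zeta'}=\delta_{\zeta\zeta'}\pi_\zeta$ and the (rescaled) comultiplication $\Delta(\pi_\zeta)\propto\pi_\zeta\otimes\pi_\zeta$ one checks that every outgoing contribution vanishes. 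Next one shows $\dim_{\mathbb C}HKh'_n(L)=n^l$ for every $l$-component link $L$: this holds for unlinks by Remark \ref{rmk1}, and it is unchanged under a crossing change because the oriented skein triangle is a triangle of filtered complexes and, after \cite{gornik, wu}, the relevant spectral sequence from the undeformed $sl(n)$ homology degenerates so that no cancellation occurs — equivalently, base change to the semisimple algebra $A$ splits $CKh'_n(D)$ into a direct sum over colorings of the components, each summand Gaussian-reducible to a single surviving line. Finally the classes $[g^D_\psi]$ are $\mathbb C$-linearly independent: distinct $\psi\neq\psi'$ disagree on some component $c$, hence on all edges of $c$, and $X_e$ acting on $HKh'_n(L)$ separates $[g^D_\psi]$ from $[g^D_{\psi'}]$ for any such edge $e$; together with the rank count, $\{[g^D_\psi]\}_\psi$ is a basis.

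I expect the heart of the argument — and the only genuinely nontrivial part — to be the local verification that each $g^D_\psi$ is a cycle and that the colored subcomplex it spans contracts onto a single line. The difficulty is bookkeeping inside the Khovanov--Rozansky cube: one must record how both the thin-edge variables and the thick ($1$-resolution) edges carry the eigenvalue data, verify compatibility of $\chi_0,\chi_1$ with this data in each local configuration of Figure \ref{12}, and pin down normalizations so that the surviving generator equals $\prod_e \frac{X_e^n-1}{X_e-\psi(\cdot)}$ on the nose rather than merely up to scalar. Once that local analysis is in hand, invariance of $HKh'_n$ under Reidemeister moves (from \cite{wu}) and functoriality of the cube differential upgrade it to the stated global description of the canonical generators.
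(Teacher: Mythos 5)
This theorem is cited by the paper from Gornik (\cite[Theorem 2]{gornik}); the paper itself provides no proof, so there is no internal argument to compare against, only the outline one would expect from Gornik's original treatment. Your sketch follows that outline faithfully in spirit: semisimplicity of $\mathbb{C}[x]/(x^n-1)$, simultaneous eigendecomposition of $H_\omega(\Gamma)$ under the commuting operators $X_e$, a local cycle check for $g^D_\psi$, a rank count, and linear independence from distinct eigenvalues. The identification of $\frac{X^n-1}{X-\zeta}$ with the idempotent $\pi_\zeta$ up to a nonzero scalar is correct and is exactly the right starting point.

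Two concrete gaps remain. First, the rank-count step as primarily phrased is circular: you invoke the degeneration of the spectral sequence from undeformed $sl(n)$ homology ``after \cite{gornik, wu},'' but that degeneration \emph{is} essentially Gornik's theorem, i.e.\ the statement you are proving. Your parenthetical alternative --- base change to the semisimple algebra $A$ so that $CKh'_n(D)$ splits as a direct sum over colorings, each summand Gaussian-reducible to a line --- is the non-circular route and must carry the whole rank argument; as written it is demoted to an aside. Second, the local verification that $g^D_\psi$ is a cycle, and that the colored subcomplex contracts onto a single line, is deferred with ``one checks.'' You correctly flag this as the only genuinely nontrivial part, but it is precisely here that the proof lives: the wide (1-resolution) pieces of the matrix factorization carry internal variables not seen by the boundary edge operators, and the maps $\chi_0,\chi_1$ are not simply multiplication by edge variables, so the claim that every outgoing cube differential either factors through $(X_e-X_{e'})$ with disagreeing labels or visibly annihilates the idempotent expression needs an explicit case-by-case computation in the local configurations of Figure \ref{12}. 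Until that is done, the proposal is a sound plan but not yet a proof.
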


\begin{df}
For a link diagram $D$, we define $g^D_i := g^D_{\psi}$, where $\psi$ is a constant map whose image is $\left\{ \xi^i \right\}$  for every $i= 0, 1, \cdots, n-1$. Note that all $g^D_i$ live in ${H} (D_\mathbf{o})$ for $\mathbf{o} = (0, 0 \cdots ,0) \in \left\{0,1\right\}^N$. 
\end{df}

 Henceforth, we fix $n$. $CKh'$ and $HKh'$ denote $CKh'_n$ and $HKh'_n$ respectively. Also $H$ denotes $H_{x^{n+1} - (n+1) x}$.

\section{Link cobordism.} 
 In this section, we want to construct a chain map $CKh'(\Sigma)$ for a link cobordism $\Sigma : L_0 \longrightarrow L_1$ in  $S^3 \times [0,1]$. If a surface $\Sigma \subset S^3 \times [0,1]$ is an oriented surface with boundaries such that $\partial \Sigma = L_0 \sqcup L_1$ and $\partial \Sigma \cap (S^3 \times \left\{i \right\} )= L_i$, then we call $\Sigma$ a link cobordism from $L_0$ to $L_1$.   It is known that $\Sigma$ can be expressed as a finite sequence of elementary moves, that is, Morse moves and Reidemeister moves. Therefore, it is enough to construct the corresponding chain maps for Morse moves and Reidemeister moves. 

\subsection{Morse moves.}
 There are three kinds of Morse moves which locally change the link diagram: 0-handle move, 1-handle move and 2-handle move. Figure \ref{morsemoves} describes how Morse moves change the link diagram. First, the 0-handle move is the operation that creates a new unknotted component disjoint from all other link components. There is a link cobordism $S_0 :D \longrightarrow D \sqcup U$ corresponding to the $0$-handle move when $D$ is a link diagram and $U$ is an unknot. $S_0$ is given by $D \times [0,1] \sqcup D^2$, where the boundary of $D^2$ is $U$. This cobordism is depicted in Figure \ref{02handle}.  
\begin{figure}
	\includegraphics[scale=0.4]{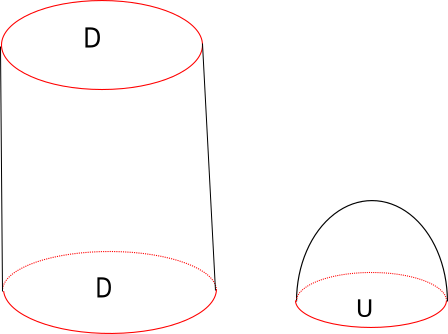}\hspace{15mm}\includegraphics[scale=0.4]{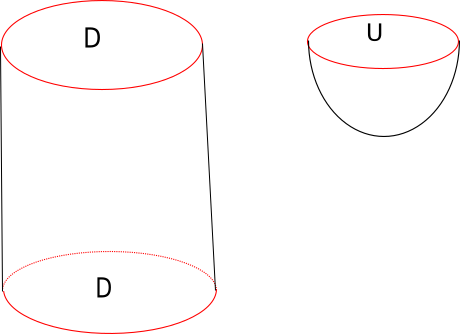}\caption{$S_0$ and $S_2$}\label{02handle}

\end{figure}
Second, the 2-handle move is a reverse operation of the 0-handle move. The 2-handle move removes an unknotted and unlinked component by adding a disjoint disk whose boundary is supported in $S^3 \times \left\{0\right\}$. The corresponding cobordism $S_2 : D \sqcup U \longrightarrow D$ is described in Figure \ref{02handle}. 
\begin{figure}
\end{figure}
Lastly, the 1-handle move is described by a saddle addition between two arcs of the link diagram. Figure \ref{1handle} describes the local picture of a cobordism $S_1$ corresponding to the $1$-handle move.
\begin{figure}
	\includegraphics[scale=0.6]{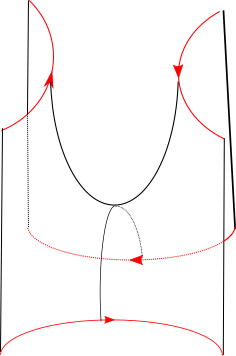}
	\caption{$S_1$}\label{1handle}
\end{figure}

\begin{figure}
\begin{tikzpicture}[scale=0.8]
	\draw[decoration={markings, mark=at position 0.125 with {\arrow{>}}},
        postaction={decorate}] (3,1.5) circle (1cm);
	\draw[<-] (2, -0.5) arc (240:300:2);
	\draw[<-] (4, -2) arc (60:120:2);
	\node at (0,1.85) {2-handle};
	\node at (0, 1.15) {0-handle};
	\node at (0, -1.2) {1-handle};
	\draw[->] (1.5,1.6)--(-1.5,1.6);
	\draw[<-] (1.5,1.4)--(-1.5,1.4);
	\draw[->] (-2, -0.5) arc (150:210:2);
	\draw[<-] (-4, -0.5) arc (30:-30:2);
	\draw[<-] (1.5,-1.5)--(-1.5,-1.5);
\end{tikzpicture}
	\label{morsemoves}\caption{}
\end{figure}

  In \cite[Section 3]{lobb1}, Lobb showed that when two link diagrams $D_0, D_1$ differ by a Morse move, there is a chain map from $CKh' (D_0)$ to $CKh'(D_1)$. Before describing the chain maps explicitly, we fix the following notations. From Remark \ref{rmk1}, $HKh' (U) = \mathbb{C}[x] / (x^n -1) \left\{1-n\right\}$. The canonical generator of $HKh' (U)$ is equal to $g_i := \frac{x^n -1}{x- \xi^i}$ when $\xi = \frac{2 \pi i}{n}$. Let  $\Sigma_n$ be a set of all roots of $x^n -1$. Then $\Sigma_n = \left\{1, \xi, \xi^2, \cdots \xi^{n-1} \right\}$. Note that $\displaystyle \sum_{j=0}^{n-1} (\prod_{\xi^j \in \Sigma_n \setminus \xi^i} \frac{1}{\xi^j - \xi^i}) g_i=1$.

 For a 0-handle map, we associate the map between chain complexes 
\begin{center}
$1 \otimes i : CKh' (D) \longrightarrow CKh' (D \sqcup U) = CKh' (D) \otimes CKh'(U)$
\end{center} of filtered degree $1-n$.
 $1 \otimes i$ sends a canonical generator $g \in CKh'(D)$ to $g \otimes 1 = \displaystyle \sum_{i=0}^{n-1} g \otimes (\prod_{\xi^j \in \Sigma_n \setminus \xi^i} \frac{1}{\xi^j - \xi^i}) g_i.$ 

 For a 2-handle map, we associate the map between chain complexes 
\begin{center}
$1 \otimes \epsilon : CKh' (D \sqcup U) = CKh' (D) \otimes CKh'(U) \longrightarrow CKh' (D)$
\end{center} of filtered degree $1-n$.
 $1 \otimes \epsilon$ sends a canonical generator $\displaystyle g \otimes g_i$ to $g$. 

 There are two kinds of 1-handle maps: a fusion map which decreases the number of components and a fission map which increases the number of components. First, let $S$ be a fusion cobordism from $D = L_1 \cup \cdots L_n$ to $D' = L'_1 \cup \cdots L'_{n-1}$, where $L'_i, L_i$ are knot diagrams. Suppose $S$ merges $L_{n-1}$ and $L_n$ into $L'_{n-1}$ and sends $L_i$ to $L'_i$ for $1 \le i \le n-2$. Then for each $\psi : \left\{L_1, \cdots, L_n\right\} \longrightarrow \Sigma_n$ satisfying $\psi(L_{n-1}) = \psi(L_n)$, we define $\bar{\psi} : \left\{ L'_1, \cdots, L'_{n-1} \right\} \longrightarrow \Sigma_n$ in the following way: $\bar{\psi} (L'_{n-1}) := \psi(L_{n-1}) = \psi(L_n)$ and $\bar{\psi} (L'_i)= \psi(L_i)$ for $1\le i \le n-2$. Lobb defined the map $CKh'(S)$ between the chain complex which is defined by the matrix factorization. $CKh'(S)$ induces a map $HKh'(S)$ on the homology level. If $\psi(L_{n-1}) = \psi(L_n)$, then $HKh'(S)$ sends $[g^D_{\psi}]$ to a nonzero multiple of $[g^{D'}_{\bar{\psi}}]$.  Otherwise it sends $[g^D_{\psi}]$ to 0. Moreover,  $HKh'(S)$ has filtered degree $n-1$.

 Next, let $S'$ be the fission cobordism from $D = L_1 \cup \cdots L_n$ to $D' = L'_1 \cup \cdots L'_{n+1}$. Suppose $S'$ splits the component $L_n$ into $L'_n$ and $L'_{n+1}$ and sends the component $L_i$ to $L'_i$ for $1 \le i \le n-1$. For each $\psi : \left\{L_1, \cdots, L_n\right\} \longrightarrow \Sigma_n$, $\bar{\psi} : \left\{L_1', \cdots, L'_{n+1} \right\}$ is defined in the following way: both $\bar{\psi} (L'_n)$ and $\bar{\psi} (L'_{n+1})$ are equal to $\psi(L_n)$ and $\bar{\psi} (L_i') = \psi (L_i)$ for $1 \le i \le n-1$. Then the corresponding cobordism map $HKh'(S')$ sends $[g^D_{\psi}]$ to a nonzero multiple of $[g^{D'}_{\bar{\psi}}]$. This map $HKh'(S')$ has filtered degree $n-1$. 

\subsection{Reidemeister moves.} Let $D_0, D_1$ be two link diagrams which differ by Reidemeister moves. In \cite[Proposition 4.2, 4.6, 4.8 and 4.9]{lobb2}, Lobb showed that there is always a chain map $\Phi$ from $CKh' (D_0)$ to $CKh'(D_1)$ which respects the homological grading and the quantum filtration when they differ by Reidemeister move I, II. Furthermore, $\Phi$ induces a map on the homology level which sends a canonical generator $[g^{D_0}_{\psi}]$ to a nonzero multiple of the corresponding canonical generator $[g^{D_1}_{\psi}]$.

  Also Wu independently showed that there is a chain map between $CKh'(D_0)$ and $CKh'(D_1)$ which preserves the homological grading and the quantum filtration, where $D_0, D_1$ differ by Reidemeister moves, in \cite[Proposition 5.9.]{wu}. This chain map induces an isomorphism on the homology level. Moreover, this map sends $[g^{D_0}_{\psi}]$ to a nonzero multiple of $[g^{D_1}_{\psi}]$.

\subsection{Link cobordism decompositions.}
 Given $\Sigma : L \longrightarrow L'$ in $S^3 \times [0,1]$, we construct $\Sigma$ as a product of elementary cobordisms. In other words, there is a sequence of link diagrams $D_0, D_1, \cdots, D_n$ such that $D_0$ is a link diagram for $L$, $D_n$ is a link diagram for $L'$ and two consecutive diagrams differ by a Morse move or a Reidemeister move. There are corresponding maps $CKh'(\Sigma_i)$ from $CKh' (D_{i-1})$ to  $CKh'(D_i)$ for $i=1,2, \cdots , n$. Then, we define $CKh'(\Sigma) = CKh'(\Sigma_n) \circ \cdots \circ CKh'(\Sigma_1).$ $CKh'(\Sigma)$ is a map of filtered degree $(1-n) \chi (\Sigma)$. In \cite{lobb1}, Lobb ordered elementary moves in the cobordism in the following way. 
 \begin{thm}\label{1.6}\cite[Theorem 1.6]{lobb1} Suppose $\Sigma_g : L_0 \longrightarrow L_1$ is a connected genus $g$ link cobordism between the two links $L_0$ and $L_1$. Suppose $\Sigma$ is embedded in $\mathbb{R}^4$. Let $D_i$ be the link projection of $L_i$. By removing $k$ disks from $\Sigma_g$, we can get a link cobordism $\bar{\Sigma} : L_0 \longrightarrow \bar{L_1}$, where $\bar{L_1}$ is a disjoint union of $L_1$ and $k$ component unlink. For some $k$, there exists a presentation of $\bar{\Sigma}$ as a sequence of elementary cobordisms with the following order:

\begin{enumerate}

\item The presentation begins with the diagram $D_0$

\item Then all the 0-handles of the presentation.

\item Then a sequence of Reidemeister I and II moves.

\item Then a sequence of fusion 1-handles, ending in a 1-component knot diagram.

\item Then $g$ fission 1-handles.

\item Then $g$ fusion 1-handles.

\item Then a sequence of Reidemeister I and II moves and fission 1-handles, ending in a diagram $\bar{D_1}$ of $\bar{L_1}$, which is the disjoint union of $D_1$ and 0-crossing diagrams of the unknot and diagrams as in Figure \ref{36}

\end{enumerate}

\end{thm}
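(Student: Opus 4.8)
Since this statement is quoted from \cite{lobb1}, what follows is an outline of how one would establish it. The plan is to put the time coordinate $t\colon \Sigma_g \to [0,1]$ in Morse position, so that $t|_{\Sigma_g}$ is a generic height function with distinct critical values and critical points of index $0$ (births), $1$ (saddles) and $2$ (deaths); these critical points are precisely the $0$-, $1$- and $2$-handles of a movie presentation, so the whole problem is to rearrange their critical values. First I would eliminate the $2$-handles: puncturing $\Sigma_g$ at each of its local maxima (there are $k$ of them, for a suitable Morse function) and pushing the resulting boundary circles up to $S^3 \times \{1\}$ produces a cobordism $\bar\Sigma \colon L_0 \to \bar L_1$ with no deaths; each new circle bounds a small embedded disk and these disks are mutually disjoint and unlinked from everything, so $\bar L_1$ is exactly $L_1$ together with a $k$-component unlink.

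Next I would slide all the births down below all the saddles. A birth inserts a small unknotted circle disjoint from the rest of the current diagram, so the elementary cobordism realizing it commutes, up to an isotopy of $\bar\Sigma$ supported near it, with any saddle lying below; iterating, the presentation begins with $D_0$ followed by all the $0$-handles, yielding a diagram of the shape $D_0$ disjoint union some $0$-crossing unknots, and inserting the Reidemeister I and II moves needed to tidy up those components gives items (1)--(3). For the saddles I would exploit connectedness of $\bar\Sigma$: the circles present just after the births all lie on one component of $\bar\Sigma$, so the ``merging'' saddles can be pushed to the bottom (a spanning-tree argument on the graph recording which circles get joined), producing a sequence of fusion $1$-handles ending in a one-component knot diagram, which is item (4). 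What remains is a connected genus $g$ cobordism from this knot to $\bar L_1$: realize its genus by $g$ fission $1$-handles, each band-summing the knot with itself, followed by $g$ fusion $1$-handles re-merging the split-off pieces, after reordering so that all $g$ fissions precede all $g$ fusions (possible because the relevant bands can be isotoped into disjoint balls), and then split the result into $\bar L_1$ by the remaining fission $1$-handles interspersed with Reidemeister I and II moves. An Euler characteristic count, $\chi(\bar\Sigma) = \chi(\Sigma_g) - k = 2 - 2g - (\#L_0) - (\#L_1) - k$, confirms that the numbers of $0$- and $1$-handles are consistent, so such $k$ and such a number of $0$-handles exist.

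The hard part will be the final bullet: keeping the Reidemeister moves to types I and II only. A priori the isotopies relating consecutive intermediate diagrams are arbitrary and could demand type III moves; the point is that, using the freedom to choose the intermediate diagrams and the exact position of each band, one can attach every $1$-handle along a short arc of the current diagram and absorb each intermediate isotopy into type I and II moves without ever creating a triple point. This is exactly where Lobb's normal form does real work, and one must also check that the reordering of critical values in the previous paragraph does not secretly reintroduce type III moves. Once that is arranged, the remaining ingredients — the commuting-past arguments for births and saddles, the spanning-tree argument, and the bookkeeping count — are routine.
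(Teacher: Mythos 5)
The theorem you are addressing is quoted verbatim from Lobb's paper \cite{lobb1}; the present paper cites it and does not reprove it, so there is no in-paper proof against which to compare your outline. Evaluated on its own, the outline captures the correct Morse-theoretic skeleton: put the height function on $\Sigma_g$ in general position, kill the deaths by puncturing at the local maxima and pushing the new boundary circles to $S^3 \times \{1\}$, slide the births to the bottom, and use connectedness of $\bar\Sigma$ (via a spanning tree of the resolution graph) to push enough fusion saddles down to collapse to a single circle. These are the standard Cerf-theoretic rearrangements underlying Lobb's argument, and your Euler-characteristic bookkeeping is consistent.

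That said, you have not closed the only genuinely nontrivial claim, and you say so yourself: that every Reidemeister move occurring in the presentation can be taken to be of type I or II. The sentence ``one can attach every $1$-handle along a short arc of the current diagram and absorb each intermediate isotopy into type I and II moves without ever creating a triple point'' is a restatement of the conclusion rather than an argument; bringing two strands into band-attaching position is an ambient isotopy that can very well sweep a third strand through a crossing, which is an RIII move. The content of Lobb's theorem is exactly that allowing the closing diagram $\bar{D_1}$ to contain, in addition to $D_1$ and $0$-crossing unknots, the unsimplified unlink diagrams of Figure~\ref{36}, lets one stop short of any RIII move rather than perform it; without an account of that mechanism the outline does not establish the statement. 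A second, smaller gap: the claim that the middle genus-$g$ piece can always be realized as $g$ fissions followed by $g$ fusions ``because the relevant bands can be isotoped into disjoint balls'' is not sufficient---reordering saddle critical values can change which saddles are fissions and which are fusions, since that distinction depends on the intermediate level sets, so the reordering needs a genuine Cerf-type argument, not just disjointness of bands.
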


\begin{figure}

\begin{tikzpicture}

\draw (0,0) circle (1cm);

\draw (0.7,0.9) arc (120:-120:1);

\draw (0.55,0.7) arc (135:225:1);

\draw (-0.3,1.1) arc (170:10:1);

\draw (-0.3, 0.8) arc (190:240:1);

\draw (1.7, 0.8) arc (-20:-70:1);

\draw (0.4,0.1) arc (255:285:1);

\end{tikzpicture}

\caption{A diagram of three component unlink.}\label{36}

\end{figure}
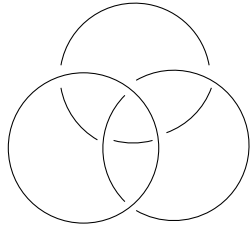
 We add the last step from $\bar{D_1}$ to $D_1$, which is a sequence of Reidemeister moves and the $2$ handles. Then we can fully order the elementary moves in the original connected cobordism $\Sigma$. If $S:D' \longrightarrow D''$ is an elementary cobordism except $0$-handle, then $HKh’(S) ([g^{D'}_i])$ is equal to a nonzero multiple of $[g^{D''}_i]$. Therefore, Theorem \ref{1.6} is quite useful when we track how the generators $[g^{D_0}_i], (i=0,1,2 \cdots, n-1)$ are mapped, because it puts all the $0$-handles in the first step. Moreover, even the cobordism is disconnected, we may achieve such ordering. We will visit this ordering of the elementary cobordisms again in Section 5 to prove theorems.
\section{Definition of $s_n$.}
 Let $D$ be a link diagram of a link $L \subset S^3$. In Section 2, we define the chain complex $CKh'(D)$, equipped with the quantum $\mathbb{Z}$-grading $q$ and the homolological $\mathbb{Z}$-grading $h$. The quantum $\mathbb{Z}$-grading $q$ induces a quantum $\mathbb{Z}$-filtration on the homology group $HKh'(D).$ We use the notation $$\cdots \mathcal{F}^j HKh'(D) \subset \mathcal{F}^{j+1} HKh'(D) \cdots$$ for the filtration. Now we define a new quantum $\mathbb{Z}$-grading $\qgr$ on $HKh'(D)$ from the filtration.
\begin{df}\cite[Definition 2.5]{lobb2}
If $V$ is a filtered vector space $\cdots \subset \mathcal{F}^iV \subset \mathcal{F}^{i+1}V \subset \cdots$, for a non-zero element $x \in V$, we define the quantum grading qgr$(x) \in \mathbb{Z}$ when qgr$(x)$ satisfies $x \ne 0$ in $\mathcal{F}^{\text{qgr}(x)} V / \mathcal{F}^{\text{qgr}(x)-1} V$.
\end{df}
 In short, $\qgr$ is defined to be the smallest filtration level which contains the element. Moreover, we define a quantum $\mathbb{Z}/ 2n\mathbb{Z}$-grading $q_n$ on $CKh'(D)$. There is a canonical homomorphism $i$ from $\mathbb{Z}$ to $\mathbb{Z}/2n\mathbb{Z}$. Then $q_n$ is defined to be $i \circ q : CKh'(D) \longrightarrow \mathbb{Z}/2n\mathbb{Z}$.  $HKh'(D)$ has the quantum $\mathbb{Z}/2n\mathbb{Z}$-grading $q_n$, induced from $q_n$ on $CKh'(D)$. 
 
 Henceforth, we focus on the summand $${H}(D_\oo) \left\{ (1-n) w(D) \right\} =\mathbb{C} [ x_1, \cdots, x_r ] / (x_1^n -1, x_2^n -1, \cdots x_r^n -1) \left\{ (1-n) (w(D) + r) \right\},$$ which is supported in the homological grading $0$ subspace of $CKh'(D).$ 

 Let $a,b$ be two cycles in ${H} (D_{\oo}){(1-n)(w(D))} \subset CKh'_0 (D)$. Then it is proved that $$q_n ( a) - q_n (b) \equiv \qgr([a]) - \qgr([b]) \mod 2n$$ in \cite[Proposition 2.6]{lobb2}.

 We say $x \in {H}(D_\oo) \left\{ (1-n) w(D) \right\}$ is $\mathbb{Z}/2n\mathbb{Z}$-homogeneous when every term of $x$ has the same $q_n$ grading. We can represent the generators $g^D_0, \cdots, g^D_{n-1} \in  {H}(D_\oo) \left\{ (1-n) w(D) \right\}$ as a linear combination of $\mathbb{Z}/2n\mathbb{Z}$-homogeneous elements. 
\begin{df}
 We define $$h^D_p = \sum_{a_1 + \cdots a_r \equiv j \mod n} x_1^{a_1} \cdots x_r^{a_r} \in {H}(D_\oo) \left\{ (1-n) w(D) \right\} $$.
\end{df}
From \cite[Lemma 2.4]{lobb2}, $\displaystyle g^D_t= \sum_{j=0}^{n-1} {\xi^{-t(j+r)} h^D_j}$ when $\xi = \frac{2 \pi i}{n}$. Note that every coefficient is not zero. Conversely, $h^D_p$ also can be represented by a linear combination of $g^D_0, \cdots, g^D_{n-1}$. Therefore, $h^D_p$ is a cycle and $[h^D_p] \in HKh'(D)$. We can easily get that $q_n(h^D_p) = 2p + (1-n)(w(D) + r) \in \mathbb{Z}/ 2n \mathbb{Z}$. This gives 
\begin{equation}\label{qgr}\qgr ( [h^D_p]) \equiv 2p+ (1-n) (w(D) + r)  \mod 2n.\end{equation}

\begin{lem}\label{obs}
For a link diagram $D$, 
$$ \qgr ([g^D_i]) = \max \left\{ \qgr [h^D_i] : i=0, 1, \cdots n-1 \right\} $$ 
\end{lem}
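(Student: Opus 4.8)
The plan is to reduce the statement to the behaviour of the $\mathbb{Z}/2n\mathbb{Z}$-homogeneous classes $[h^D_j]$, whose quantum gradings are forced to be pairwise distinct. First I would record the two ingredients. By \cite[Lemma 2.4]{lobb2}, in $HKh'(D)$ we have $[g^D_i]=\sum_{j=0}^{n-1}\xi^{-i(j+r)}[h^D_j]$ with every coefficient nonzero, and by \eqref{qgr} we have $\qgr([h^D_j])\equiv 2j+(1-n)(w(D)+r)\pmod{2n}$. Since the residues $2j\bmod 2n$ for $j=0,1,\dots,n-1$ are pairwise distinct, the integers $\qgr([h^D_0]),\dots,\qgr([h^D_{n-1}])$ are pairwise distinct. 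Put $M:=\max_{0\le j\le n-1}\qgr([h^D_j])$ and let $j_0$ be the unique index with $\qgr([h^D_{j_0}])=M$; note that $M$ does not depend on $i$, which is what makes the asserted equation consistent.

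Next I would establish the two inequalities $\qgr([g^D_i])\le M$ and $\qgr([g^D_i])\ge M$. For the upper bound: by definition of $\qgr$, each $[h^D_j]$ lies in $\mathcal{F}^{\qgr([h^D_j])}HKh'(D)\subseteq \mathcal{F}^{M}HKh'(D)$, and since $\mathcal{F}^{M}HKh'(D)$ is a linear subspace it contains the combination $[g^D_i]=\sum_j\xi^{-i(j+r)}[h^D_j]$, so $\qgr([g^D_i])\le M$. For the lower bound I would pass to the associated graded piece $\mathcal{F}^{M}HKh'(D)/\mathcal{F}^{M-1}HKh'(D)$: for $j\ne j_0$ we have $\qgr([h^D_j])\le M-1$, so $[h^D_j]\in\mathcal{F}^{M-1}HKh'(D)$ and its class in this quotient is zero; hence the class of $[g^D_i]$ in this quotient equals $\xi^{-i(j_0+r)}$ times the class of $[h^D_{j_0}]$, which is nonzero because $\xi^{-i(j_0+r)}\ne 0$ and, by the defining property $\qgr([h^D_{j_0}])=M$, the class of $[h^D_{j_0}]$ in $\mathcal{F}^{M}/\mathcal{F}^{M-1}$ is nonzero. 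Thus $[g^D_i]\notin\mathcal{F}^{M-1}HKh'(D)$, and together with $[g^D_i]\in\mathcal{F}^{M}HKh'(D)$ this gives $\qgr([g^D_i])=M$, which is the claim.

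The only delicate point is the claim that $\qgr([h^D_0]),\dots,\qgr([h^D_{n-1}])$ are pairwise distinct: if two of them agreed, the two leading terms in the expansion of $[g^D_i]$ could a priori cancel in the associated graded and the lower bound would fail. This is precisely where \eqref{qgr} is used, the key being that the $n$ residues $0,2,\dots,2n-2$ are distinct modulo $2n$, so the congruence pins down the actual integer gradings to distinct values. Everything else is formal manipulation of a filtered vector space — closure of $\mathcal{F}^{M}$ under linear combinations, and vanishing of lower-filtration classes in the associated graded — so I do not anticipate any further obstacle.
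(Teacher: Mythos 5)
Your argument is correct and follows the same route as the paper, which simply asserts the equality is ``obvious'' from the expansion $[g^D_i]=\sum_j \xi^{-i(j+r)}[h^D_j]$ with nonzero coefficients. You have usefully made explicit the one genuinely nontrivial point that the paper leaves implicit: Equation~\eqref{qgr} forces $\qgr([h^D_0]),\dots,\qgr([h^D_{n-1}])$ to be pairwise distinct, so the top term cannot cancel in the associated graded and the maximum is achieved.
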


\begin{proof}
 It is obvious since $g^D_i$ can be represented by a sum of a nonzero multiples of $h^D_0, \cdots, h^D_{n-1}$. 
\end{proof}
 When $D_1,D_2$ are two different oriented link diagrams of a link $L$, $\qgr( [g^{D_1}_i]) = \qgr( [g^{D_1}_i])$. This is straightforward since there is a homological grading and quantum filtration preserving isomorphism between $HKh'(D_1)$ and $HKh'(D_2)$ which maps $[g^{D_1}_i]$ to a nonzero multiple of $[g^{D_2}_i]$, from \cite[Proposition 5.9]{wu}. Now, we are ready to define $s_n$ for links. 
\begin{df}\label{defsn}
For a link $L$, we define a link invariant $$s_n(L)= \qgr([g^D_i]) - n + 1 \in \mathbb{Z}$$ for a link diagram $D$ of $L$ and $i \in \left\{0,1, \cdots, n-1\right\}$. This definition does not depend on the choice of $D$ and $i$. In particular, when $L$ is a knot, this is equivalent with Lobb's $s_n$ invariants. 
\end{df}

\section{Properties of $s_n$.} 

 In this section, we examine properties of new link invariants $s_n$. From the definition, we attain $s_n$ for splittable links.
\begin{prop}\label{prop2}
For a split link $L$, this can be uniquely represented by a union of $L_1, \cdots, L_m$ with non-splittable links $L_i$. Then $s_n (L) = s_n(L_1) + \cdots +  s_n (L_m) + (n-1)(m-1)$. In particular, for an $m$ component unlink $U_m$, $s_n (U_m) = (n-1)(m-1)$.
\end{prop}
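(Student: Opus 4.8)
The plan is to isolate the binary disjoint-union identity $s_n(A\sqcup B)=s_n(A)+s_n(B)+(n-1)$ for arbitrary links $A$ and $B$, prove that, and then bootstrap by induction. Existence of a decomposition of a split link into non-splittable pieces is immediate (keep cutting along splitting spheres), and its uniqueness is a standard $3$-manifold fact; since $s_n$ is in any case already a well-defined link invariant, uniqueness is needed only to make the statement of the Proposition unambiguous, not for the computation. Granting the binary identity and the base value $s_n(U)=0$ for the unknot $U$, applying the identity with $A=L_1$ and $B=L_2\sqcup\cdots\sqcup L_m$ and inducting on $m$ gives $s_n(L_1\sqcup\cdots\sqcup L_m)=\sum_{i=1}^m s_n(L_i)+(n-1)(m-1)$, and the case $L_i=U$ for all $i$ gives $s_n(U_m)=(n-1)(m-1)$.

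First I would prove the binary identity. Fix diagrams $D_A$ and $D_B$ and work with $D_A\sqcup D_B$. By Remark \ref{dis} the formal variables of $D_A\sqcup D_B$ are the disjoint union of those of $D_A$ and those of $D_B$, so the quantum grading is additive, $q(a\otimes b)=q(a)+q(b)$; hence the identification \eqref{dis2} is an isomorphism of \emph{filtered} complexes, the target carrying the convolution filtration $\mathcal{F}^k(C_A\otimes C_B)=\sum_{p+q=k}\mathcal{F}^pC_A\otimes\mathcal{F}^qC_B$. Passing to homology over $\mathbb{C}$ preserves this, so $HKh'(D_A\sqcup D_B)\cong HKh'(D_A)\otimes_{\mathbb{C}}HKh'(D_B)$ as filtered vector spaces. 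Next, applying Gornik's product formula to the constant map $\psi\equiv\xi^i$ on $D_A\sqcup D_B$ --- which performs the $0$-resolution at every crossing, so the generator lives in $H((D_A)_\oo)\otimes H((D_B)_\oo)$ --- the product over edges factors into the products over the edges of $D_A$ and of $D_B$, so $g^{D_A\sqcup D_B}_i$ is a nonzero scalar multiple of $g^{D_A}_i\otimes g^{D_B}_i$. Finally, for filtered vector spaces over a field with bounded filtrations --- which is our setting, these homologies being finite-dimensional --- one has $\mathrm{gr}(V\otimes W)\cong\mathrm{gr}(V)\otimes\mathrm{gr}(W)$, so $\qgr$ is additive on simple tensors; hence $\qgr([g^{D_A\sqcup D_B}_i])=\qgr([g^{D_A}_i])+\qgr([g^{D_B}_i])$, which via Definition \ref{defsn} is exactly $s_n(A\sqcup B)+(n-1)=(s_n(A)+(n-1))+(s_n(B)+(n-1))$.

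For the base case I would take the crossing-free diagram $U$ of the unknot, so $w(U)=0$, the oriented resolution is a single circle ($r=1$), and $H(U_\oo)\{(1-n)w(U)\}=\mathbb{C}[x]/(x^n-1)\{1-n\}$ with filtration induced by $q(x)=2$. Then $h^U_p$ is represented by $x^p$, and since the monomial basis splits the filtration, \eqref{qgr} holds exactly: $\qgr([h^U_p])=2p+(1-n)$ for $p=0,\dots,n-1$. By Lemma \ref{obs}, $\qgr([g^U_i])=\max_{0\le p\le n-1}(2p+1-n)=n-1$, hence $s_n(U)=(n-1)-(n-1)=0$. (Alternatively, $s_n(U)=0$ because $s_n$ agrees with Lobb's knot invariant on knots, which vanishes on the unknot.) Together with the binary identity and the induction above, this proves the Proposition.

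The step I expect to be most delicate is confirming that \eqref{dis2}, and the K\"unneth isomorphism it induces on homology, is genuinely \emph{filtered} with the convolution filtration on the target; everything downstream rests on $\qgr$ being additive on tensor products. A clean way to pin this down, avoiding abstract filtered-module lemmas, is to represent $\qgr$ in each factor by cycles in $H(D_\oo)\{(1-n)w(D)\}$ built from the $q_n$-homogeneous elements $h^D_p$ of Section 4 (as in the proof of Lemma \ref{obs}); tensoring such representatives gives a compatible family in $CKh'(D_A\sqcup D_B)$, and additivity of $q_n$ together with \cite[Proposition 2.6]{lobb2}, which relates $q_n$-degree differences to $\qgr$ differences, makes the additivity of $\qgr$ transparent. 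The remaining ingredients --- the product formula for $g^D_\psi$ and the unknot computation --- are routine consequences of Sections 2 and 4.
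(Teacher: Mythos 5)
Your proof is correct and is essentially the same argument as the paper's: both use $D=D_1\sqcup\cdots\sqcup D_m$, factor $[g^D_i]=[g^{D_1}_i]\otimes\cdots\otimes[g^{D_m}_i]$ via Equation~\eqref{dis2}, and add the quantum filtration levels, with the binary-plus-induction packaging being only cosmetic. You fill in details the paper leaves implicit --- the factorization of Gornik's generator formula, the filtered K\"unneth isomorphism and additivity of $\qgr$ on simple tensors, and the explicit unknot base case --- and correctly flag that the additivity of $\qgr$ (valid here because the filtrations are bounded and we are over a field) is the step that actually deserves justification.
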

\begin{proof}
 Let $D_i$ be a link diagram of $L_i$. Then $D=D_1 \sqcup \cdots \sqcup D_m$ is a link diagram for $L$. From Equation \ref{dis2}, $[g^{D}_i] =  [g^{D_1}_i] \otimes [g^{D_2}_i]\cdots \otimes [g^{D_m}_i]$. Therefore $$s_n (L) = \qgr ([g^{D}_i]) - (n-1) = \sum_{j=1}^m \qgr ([g^{D_j}_i]) - (n-1) = s_n (L_1) + \cdots s_n (L_m) + (n-1)(m-1).$$ For an unlink, $L_1, \cdots L_m$ are unknots. Therefore $s_n (L) = (n-1)(m-1)$.
\end{proof}

 We need the following two lemmas to prove the theorems stated in the introduction.

 \begin{lem}\label{lem2}
Suppose there is a link cobordism $\Sigma :L_1 \longrightarrow L_2$. We fix link diagrams $D_1, D_2$ for $L_1, L_2$. Then we construct the map $HKh'(\Sigma): HKh'(D_1) \longrightarrow HKh'(D_2)$ by a product of maps corresponding to elementary cobordisms explained in Section 3.3. If $HKh'(\Sigma)$ maps $[g^{D_1}_i]$ to a nonzero multiple of $[g^{D_2}_i]$, then $$s_n (L_1) - (n-1) \chi (\Sigma)\ge s_n (L_2).$$
\end{lem}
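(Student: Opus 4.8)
The plan is to use the behaviour of the cobordism map $HKh'(\Sigma)$ on the filtration. First I would recall that $CKh'(\Sigma)$ is a filtered map of degree $(1-n)\chi(\Sigma)$, as stated in Section 3.3, so that the induced map on homology $HKh'(\Sigma)$ carries $\mathcal{F}^j HKh'(D_1)$ into $\mathcal{F}^{j+(1-n)\chi(\Sigma)} HKh'(D_2)$. In particular, if $x$ is any nonzero cycle with $\qgr(x) = m$, then $HKh'(\Sigma)(x)$, when nonzero, lies in filtration level at most $m + (1-n)\chi(\Sigma)$, so $\qgr(HKh'(\Sigma)(x)) \le \qgr(x) + (1-n)\chi(\Sigma)$.

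Next I would apply this with $x = [g^{D_1}_i]$. By hypothesis $HKh'(\Sigma)([g^{D_1}_i])$ is a nonzero multiple of $[g^{D_2}_i]$, and since scaling by a nonzero constant does not change the $\qgr$ of an element, we get $\qgr([g^{D_2}_i]) = \qgr(HKh'(\Sigma)([g^{D_1}_i])) \le \qgr([g^{D_1}_i]) + (1-n)\chi(\Sigma)$. Now I would subtract $n-1$ from both sides and invoke Definition \ref{defsn}, which expresses $s_n(L_j) = \qgr([g^{D_j}_i]) - n + 1$ independently of the choice of diagram and of $i$. This yields $s_n(L_2) \le s_n(L_1) + (1-n)\chi(\Sigma)$, which, rewriting $(1-n)\chi(\Sigma) = -(n-1)\chi(\Sigma)$, is exactly the claimed inequality $s_n(L_1) - (n-1)\chi(\Sigma) \ge s_n(L_2)$.

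The only genuinely substantive point — and the step I would expect to be the main obstacle — is justifying that the induced map on homology really shifts the filtration by exactly $(1-n)\chi(\Sigma)$, i.e.\ that filtered degree of a chain map passes to the induced map on the associated graded/filtered homology. This is a formal consequence of $CKh'(\Sigma)$ being built as a composition of the elementary cobordism maps (0-handle, 2-handle, fusion and fission 1-handles, Reidemeister moves) whose filtered degrees were recorded individually in Section 3.3: the 0- and 2-handle maps have filtered degree $1-n$, each 1-handle map has filtered degree $n-1$, and the Reidemeister maps preserve the filtration. Since $\chi(\Sigma)$ is additive under gluing elementary cobordisms and equals (number of 0- and 2-handles) minus (number of 1-handles), the total filtered degree of the composite is $(1-n)\chi(\Sigma)$; one then checks that a filtered chain map of degree $d$ induces a map on homology sending $\mathcal{F}^j H \to \mathcal{F}^{j+d} H$, which is immediate from the definitions of the filtration on homology and of $\qgr$. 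No delicate computation is needed beyond this bookkeeping, so once the filtered-degree statement is in hand the lemma follows in a couple of lines.
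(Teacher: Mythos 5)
Your proof is correct and takes essentially the same approach as the paper: observe that $HKh'(\Sigma)$ has filtered degree $(1-n)\chi(\Sigma)$, deduce $\qgr([g^{D_2}_i]) \le \qgr([g^{D_1}_i]) + (1-n)\chi(\Sigma)$ from the hypothesis, and translate via Definition \ref{defsn}. The extra bookkeeping you add at the end (totalling the filtered degrees of elementary cobordisms and checking that filtered chain maps induce filtered maps on homology) is a slightly more explicit justification of a fact the paper simply cites from Section 3.3, but the argument is otherwise identical.
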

\begin{proof}
Since $HKh'(\Sigma)$ is a map of filtered degree $(1-n) \chi (\Sigma)$, $$\text{qgr} ([g^{D_1}_i]) - (n-1) \chi (\Sigma) \ge \text{qgr} ([g^{D_2}_i].$$ Therefore,
$$ s_n(L_1) + n -1 - (n-1)  \chi ({\Sigma})  \ge s_n (L_2) + n-1.$$
\end{proof}

\begin{lem}\label{prop1} Let $L$ be a link and $D$ be its diagram.There is an element $h^D\in {H} (D_{\oo}) \left\{ (1-n) w(D) \right\} \subset CKh' (D) $, which is a linear sum of nonzero multiples of $g^D_0, \cdots g^D_{n-1}$, satisfying $$\text{qgr} ([h^D]) \le s_n (L) - n + 1.$$\end{lem}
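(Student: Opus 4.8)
The plan is to exhibit an explicit element $h^D$ as a linear combination of the $g^D_i$ whose $\qgr$ is controlled by the $\qgr$ of the homogeneous pieces $h^D_p$. Recall from Definition~\ref{defsn} that $s_n(L) - n + 1 = \qgr([g^D_i])$ for every $i$, and from Lemma~\ref{obs} that $\qgr([g^D_i]) = \max\{\qgr([h^D_p]) : p = 0,1,\dots,n-1\}$. So the target inequality reads $\qgr([h^D]) \le \max_p \qgr([h^D_p])$. The natural candidate is to take $h^D$ to be one of the homogeneous generators $h^D_p$ itself — but the lemma requires $h^D$ to be a linear combination of \emph{nonzero} multiples of \emph{all} the $g^D_j$, so a single $h^D_p$ will not literally do. Instead I would set $h^D := h^D_{p_0}$ where $p_0$ realizes the maximum in Lemma~\ref{obs}, and then use the change-of-basis formula $g^D_t = \sum_{j=0}^{n-1} \xi^{-t(j+r)} h^D_j$ (and its stated inverse) to re-express $h^D_{p_0}$ as a linear combination of the $g^D_t$; since the inverse Vandermonde-type matrix has all entries nonzero (this is where one invokes that $\xi$ is a primitive $n$th root of unity), every coefficient in that expansion is nonzero, as required.

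The key steps, in order, would be: (1) choose $p_0 \in \{0,\dots,n-1\}$ with $\qgr([h^D_{p_0}]) = \max_p \qgr([h^D_p])$; (2) invert the relation $g^D_t = \sum_j \xi^{-t(j+r)} h^D_j$ — this is a discrete Fourier transform over $\mathbb{Z}/n\mathbb{Z}$, so $h^D_p = \frac{1}{n}\sum_{t=0}^{n-1} \xi^{t(p+r)} g^D_t$ (up to the exact normalization the paper uses), and observe that all $n$ coefficients $\frac{1}{n}\xi^{t(p+r)}$ are nonzero; (3) set $h^D := h^D_{p_0}$, which by step (2) is a linear combination of nonzero multiples of all the $g^D_j$, hence lies in $H(D_{\oo})\{(1-n)w(D)\}$ and is a cycle as noted in the text before Lemma~\ref{obs}; (4) conclude $\qgr([h^D]) = \qgr([h^D_{p_0}]) = \max_p \qgr([h^D_p]) = \qgr([g^D_i]) = s_n(L) - n + 1$, which in particular gives the asserted $\le$.

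I do not expect any serious obstacle here; the statement is essentially a repackaging of Lemma~\ref{obs} together with the invertibility of the $h \leftrightarrow g$ change of basis, both of which are already available in the excerpt (the latter from \cite[Lemma 2.4]{lobb2}). The one point that needs a little care is making sure the constructed $h^D$ genuinely has \emph{every} coefficient nonzero when written in the $g^D_j$ basis — this is exactly the content of "Note that every coefficient is not zero" in the discussion following the definition of $h^D_p$, applied to the inverse transform — and that $\qgr$ of a $\mathbb{Z}/2n\mathbb{Z}$-homogeneous cycle is well defined and equals the $\qgr$ of the corresponding class, which follows from the filtration being induced by the $q$-grading. If one prefers a slightly weaker but even simpler route, one can instead take $h^D := \sum_{j} c_j g^D_j$ for \emph{generic} nonzero scalars $c_j$ and argue that $\qgr$ of a generic combination equals the max of the $\qgr([g^D_j])$, which are all equal; but the homogeneous-element construction above is cleaner and is what the surrounding text is set up to support.
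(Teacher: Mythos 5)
There is a genuine gap, and it stems from a sign error in recalling Definition~\ref{defsn}. You wrote ``$s_n(L) - n + 1 = \qgr([g^D_i])$,'' but the definition actually reads $s_n(L) = \qgr([g^D_i]) - n + 1$, i.e.\ $\qgr([g^D_i]) = s_n(L) + n - 1$. The target inequality $\qgr([h^D]) \le s_n(L) - n + 1$ therefore asks for $\qgr([h^D]) \le \qgr([g^D_i]) - (2n-2)$, which is a nontrivial \emph{gap} of $2n-2$ below the filtration level of $[g^D_i]$, not equality with it. Because of this misreading, your step (1) selects the $p_0$ realizing the \emph{maximum} of $\qgr([h^D_p])$, and step (4) concludes $\qgr([h^D_{p_0}]) = \qgr([g^D_i]) = s_n(L) - n + 1$. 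The last equality is false; the correct value is $s_n(L) + n - 1$, which is exactly $2n-2$ too large. As written, the constructed $h^D$ does not satisfy the required bound.

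The fix requires a different ingredient than anything you invoke. The paper takes $h^D$ to be the $h^D_p$ realizing the \emph{minimum} $\qgr$, and then uses Equation~\ref{qgr}: the residues $\qgr([h^D_p]) \equiv 2p + (1-n)(w(D)+r) \pmod{2n}$ are pairwise distinct for $p = 0,\dots,n-1$, so the $n$ integers $\qgr([h^D_p])$ are pairwise distinct mod $2n$ and hence
\[
\max_p \qgr([h^D_p]) - \min_p \qgr([h^D_p]) \ge 2n-2.
\]
Combining this with Lemma~\ref{obs} (which identifies the max with $\qgr([g^D_i])$) gives $\min_p \qgr([h^D_p]) \le \qgr([g^D_i]) - (2n-2) = s_n(L) - n + 1$. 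Your observation that each $h^D_p$ is a linear combination of the $g^D_j$ with all coefficients nonzero (via the inverse discrete Fourier transform) is correct and is needed, but without the mod-$2n$ distinctness argument there is no control on how small the minimum is, and taking the maximum cannot yield the bound.
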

\begin{proof} 
 From Equation \ref{qgr}, it is straightforward that  $$ \max \left\{ \qgr[h^D_i] : i=0, 1, \cdots n-1 \right\} - \min \left\{ \qgr[h^D_i] : i=0, 1, \cdots n-1 \right\} \ge 2n-2.$$ Then, from Lemma \ref{obs}, $$\min \left\{ \qgr[h^D_i] : i=0, 1, \cdots n-1 \right\} \le \qgr([g^D_i]) - 2n + 2.$$
 Therefore, there exists an $i$ such that $h^D_i$ satisfies the conditions in this lemma.
\end{proof}

 Now we prove the theorems stated in the introduction.

\begin{proof}[Proof Theorem \ref{main}]
 First, let's prove the right side inequality. For an oriented surface $F$ in $4$ ball $B^4$ such that $\partial F = L \subset S^3$, we obtain a surface $F'$ by deleting a disk from $F$.  $F'$ can be considered as a cobordism $$\Sigma:L \longrightarrow U,$$ where $U$ is an unknot. Note that $\chi(\Sigma) = \chi(F)-1.$ Now we consider the ordering of elementary moves explained in Section 3.3. Let $k$ be the number of connected components of $F$. We start from $D_0$, a link diagram of $L$. 

\begin{enumerate}
\item all the $0$ handle moves.
\item Reidemeister moves.
\item A sequence of fusion $1$ handles ending in a $k$ component link diagram.
\item $g$ fission $1$-handles and $g$ fusion $1$-handles.
\item Reidemeister moves 
\item $(k-1)$ 2-handles. 
\end{enumerate}
 
 Compared with Theorem \ref{1.6}, the $3$rd step ends in a $k$ component link diagram instead of a knot diagram, since there are $k$ connected components. The first step maps $[g^{D_0}_i] \in HKh'(D_0)$ to $[g^{D_0}_i] \otimes 1 \cdots \otimes 1 \in HKh'(D_0) \otimes HKh'(U) \otimes \cdots \otimes HKh'(U)$. Note that $\displaystyle \sum_{j=0}^{n-1} (\prod_{\xi^k \in \Sigma_n \setminus \xi^j} \frac{1}{\xi^k - \xi^j}) g_j =1$. Let $D'$ be the link diagram we get after the first three steps.

 Let $S$ be a fusion $1$-handle map from $D \sqcup U$ to $D$ for a link diagram $D$. Then $HKh'(S)$ sends $$[g^{D}_i] \otimes 1 =\sum_{j=0}^{n-1} (\prod_{\xi^k \in \Sigma_n \setminus \xi^j} \frac{1}{\xi^k - \xi^j}) ([g^D_i] \otimes [g_j]) $$ to $\displaystyle \prod_{\xi^k \in \Sigma_n \setminus \xi^i} (\frac{1}{\xi^k - \xi^j}) [g^{D}_i]$, since $HKh'(S) ([g^D_i] \otimes [g_j]) = \begin{cases} [g^D_i] & \mbox{if $i=j$} \\ 0 &\mbox{otherwise} \end{cases}$. The 3rd step is a sequence of fusion $1$ handle maps whose descriptions are equivalent to $S$. Therefore, $[g^{D_0}_i]$ is mapped to a nonzero multiple of $[g^{D'}_i]$ after the first three steps. All elementary moves except the $0$-handle maps send $[g_i]$ to a nonzero multiple of $[g_i]$. From Lemma \ref{lem2}, $$s_n (L) - (n-1) \chi (\Sigma) \ge 0.$$

 Next, let's see the left side inequality. We obtain a surface $\Sigma'$ from $F$ by deleting one disk per each connected component. Then $\chi(\Sigma') = \chi(F) - k$. $\Sigma'$ is the cobordism from $k$ component unlink $U_k$ to $L$. Moreover, $HKh'(\Sigma')$ maps $[g^{U_k}_i]$ to $[g^D_i]$. Thus, $$(n-1)(k-1) - (n-1) \chi (\Sigma') \ge s_n (L).$$ 

\end{proof}

\begin{proof}[Proof of Theorem \ref{thm2}] We have the cobordism $\Sigma : L_0 \longrightarrow L_1$ which is a union of annuli. We fix link diagrams $D_0, D_1$ for $L_0, L_1$. Let $\Sigma = \Sigma_1 \sqcup \cdots \sqcup \Sigma_l$ when $\Sigma_i$ is a annulus, which has two boundaries such that one boundary is one component of $L_0$ and the other boundary is one component of $L_1$. $\Sigma$ can be represented by a product of elementary moves in the following order from Theorem \ref{1.6}. We don't need to have step $5,6$ in Theorem \ref{1.6}  since $\Sigma$ is a genus $0$ surface. 
\begin{enumerate}
\item all the $0$ handles. 
\item Reidemeister moves.
\item fusion $1$-handles, ending in an $l$ component link diagram $D'$.
\item Reidemeister moves.
\end{enumerate}
 The first step maps $[g^{D_0}_i]$ to $[g^{D_0}_i] \otimes 1 \otimes \cdots 1 \in HKh'(D_0) \otimes HKh'(U) \otimes \cdots HKh'(U)$. Then the Reidemeister moves map every element into itself in the homology level. The fusion maps in the $3$rd step send $[g^{D_0}_i] \otimes 1 \otimes \cdots 1$ to $[g^{D'}_i]$. Therefore, $HKh'(\Sigma)$ sends $[g^{D_0}_i]$ to $[g^{D_1}_i]$. 
 The link cobordism $\Sigma' : L_1 \longrightarrow L_0$ obtained from $\Sigma$ by flipping also satisfies the condition in Lemma \ref{lem2}. Therefore,
\begin{align*}
s_n (L_0) - (n-1) \chi ( \Sigma) \ge s_n (L_1), \\
s_n (L_1) - (n-1) \chi ( \Sigma') \ge s_n (L_0).
\end{align*}
$\chi(\Sigma) = \chi (\Sigma') = 0$ implies $$s_n (L_0) = s_n (L_1).$$

\end{proof}
  
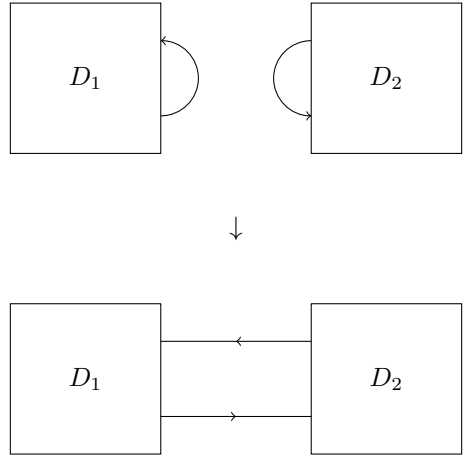
\begin{figure}
\begin{tikzpicture}
	\draw (1,1)--(3,1)--(3,3)--(1,3)--(1,1);
	\node at (2,2) {$D_2$};

	\draw (-1,1)--(-3,1)--(-3,3)--(-1,3)--(-1,1);
	\node at (-2,2) {$D_1$};

	\draw (1,-1)--(3,-1)--(3,-3)--(1,-3)--(1,-1);
	\node at (2,-2) {$D_2$};

	\draw (-1,-1)--(-3,-1)--(-3,-3)--(-1,-3)--(-1,-1);
	\node at (-2,-2) {$D_1$};
	\node at (0,0) {$\downarrow$};

 	\draw[->] (1,-1.5)--(0,-1.5);
	\draw (0,-1.5)--(-1,-1.5);
 	\draw (1,-2.5)--(0,-2.5);
	\draw[<-] (0,-2.5)--(-1,-2.5);
	
	\draw[->] (1,2.5) arc (90:270:0.5);
	\draw[->] (-1,1.5) arc (-90:90:0.5);

\end{tikzpicture}
	\caption{Connected sum of two links.}\label{conn} 
\end{figure}

\begin{proof}[Proof of Theorem \ref{thm3}]
First, (\ref{union}) is straightforward from Proposition \ref{prop2}. Next for (\ref{connected}), let $D_1, D_2$ be link diagrams for $L_1, L_2$. we have a 1-handle map from $D_1 \sqcup D_2$ to a connected sum of $D_1$ and $D_2$. The cobordism described in Figure \ref{conn} gives a map from $HKh'(L_1 \sqcup L_2)$ to $HKh'(L_1 \#_{i_1 = i_2} L_2).$ Since $1$-handle map has filtered degree $n-1$,
\begin{align*}
s_n (L_1) + s_n (L_2) + (n-1) + (n-1) &\ge s_n (L_1 \#_{i_1 = i_2} L_2). \\ 
s_n(L_1 \#_{i_1 = i_2} L_2) + n-1 &\ge s_n (L_1) + s_n (L_2) + (n-1).
\end{align*} 
  There exists a generator $[h^{D_2}] \in HKh' (D_2) $ satisfying $\text{qgr} ([h]) \le s_n(L_2) - n +1$ from Lemma \ref{prop1}. Then the above one handle cobordism from $L_1 \sqcup L_2$ to $L_1 \#_{i_1 = i_2} L_2$ maps $[g^{D_1}_i] \otimes [h^{D_2}]$ to a nonzero multiple of $[g^{D_1 \#_{i_1 = i_2} D_2}_i]$. Therefore, $$\text{qgr} ([g^{D_1}_i]) + \text{qgr} ([h^{D_2}]) + n-1  \ge \text{qgr} ([g^{D_1 \#_{i_1 = i_2} D_2}_i].$$ From the definition of $s_n$,
 \begin{equation*}
s_n(L_1) + n-1 + s_n (L_2) - n+1 +n-1 \ge s_n (L_1 \#_{i_1 = i_2} L_2) + n-1.
\end{equation*} hence $s_n(L_1) +s_n (L_2) \ge s_n(L_1 \#_{i_1 = i_2} L_2)$. Combining two inequalities, $$s_n(L_1) + s_n(L_2) = s_n (L_1 \#_{i_1 = i_2} L_2).$$ 

 For (\ref{mirror}), there is a link cobordism from $L \sqcup \bar{L}$ to the $l$ component unlink $U_n$. This cobordism consists of $l$ fusion 1-handles, which is filtered degree $(n-1)l$. This cobordism satisfies the condition in Lemma \ref{lem2}. Therefore, $$(n-1)(l-1) + (n-1)l \ge s_n (L) + s_n (\bar{L}) + (n-1).$$
 Then $(n-1)(2l-2) \ge s_n (L) + s_n(\bar{L})$.

 Furthermore, we pick $[h^D] \in HKh'(D)$ from Proposition \ref{prop1} when $D$ is a link diagram of $L$. Let $\bar{D}$ be the mirror of $D$. Then the cobordism from $L \sqcup \bar{L}$ to the $l$ component unlink maps $[h^D] \otimes [g^{\bar{D}}_i]$ to a nonzero multiple of $[g^{U_n}_i]$. Therefore, $\text{qgr} ([h^D]) + \text{qgr} ([g^{\bar{D}}_i]) + (n-1)l \ge \text{qgr}([g^{U_n}_i])$. From Lemma \ref{prop1}, $$s_n(L) + s_n(\bar{L}) \ge 0.$$

\end{proof}

\begin{proof}[Proof of Theorem \ref{positive}]
 Let's show that  $s_n (L) = (1-n)(c-r+1)$.  Since the image of the boundary map is empty in the homological grading 0, $\text{qgr} [g_i]$ is attained by the maximum quantum grading of $x_1^{n-1} \cdots x_r^{n-1} \in \mathbb{C} [ x_1, \cdots, x_r ] / (x_1^n -1, x_2^n -1, \cdots x_r^n -1) \left\{(1-n) (c + r)\right\}$. Therefore
\begin{center}
 $\text{qgr} [g_i]= 2r(n-1) + (1-n) (c+r) = (1-n) (c-r)$.
\end{center}

 We showed  $s_n (L) = (1-n)(c-r+1)$. 
 Then $c-r+1 \le 2g_4 (L) + l - 1 \le 2 g_3(L) + l-1 = c-r+1$ from Theorem \ref{main} and the fact $g_4 (L) \le g_3(L)$. Therefore every inequality becomes an equality. 
\end{proof}

\begin{proof}[Proof of Theorem \ref{change}]
 We can consider a link cobordism from $L_{+}$ to $L_{-}$ with a single double point. After resolving this double point, we can get a link cobordism $\Sigma$ from $L_{+}$ to $L_{-}$. We fix link diagrams $D_{+}$ and $D_{-}$ for $L_{+}$ and $L_{-}$. 

 First, suppose two strands of the crossing are in the same component. For other $l-1$ components which are not involved with the crossing, the cobordism is a topological cylinder. For the component which is involved with the crossing, the cobordism is a genus $1$ surface, which can be represented by a product of one fusion handle map and the other fission 1-handle map. Note that the fission map comes first. Therefore $HKh'(\Sigma)$ sends $[g^{D_{+}}_i]$ to a nonzero multiple of $[g^{D_{-}}_i]$  from the descriptions given in Section 3.1. In addition, this $\Sigma$ consists of $l-1$ annuli and genus 1 surface with 2 boundaries, hence $\chi(\Sigma) = -2$. Conversely, we can construct a link cobordism from $L_{-}$ to $L_{+}$ whose Euler characteristic is equal to $-2$. From Lemma \ref{lem2}, $$|s_n (L_{+}) - s_n (L_{-})| \le 2(n-1).$$

 Next, suppose two strands of the crossing are in the different component. Then, this link cobordism $\Sigma$ consists of $l-2$ annuli and genus $0$ surface with $4$ boundaries. This genus $0$ surface with $4$ boundaries is also a product of one fusion map and one fussion map. Note that the fusion map comes first in this case. $\chi (\Sigma) = -2$, thus $$|s_n (L_{+}) - s_n (L_{-})| \le 2(n-1).$$

\end{proof}

\begin{proof}[Proof of Theorem \ref{splitting}]
 First, from Theorem \ref{change},  $$| s_n (L) - s_n (L_1 \sqcup \cdots \sqcup L_l) | \le 2(n-1) sp(L).$$ Then, from Proposition \ref{prop2}, the statement is proved. 
\end{proof}

\begin{proof}[Proof of Corollary 3]
 First, we will show that $\displaystyle sp(L) \le \frac{l(l-1)}{2} pq$ by an induction on $l$. The base case is obvious since $sp(L)=0$ for one component link $L$. Then we assume that $\displaystyle sp(T((l-1)p, (l-1)q)) \le \frac{(l-1)(l-2)}{2} pq$ for $l>2$ as an induction hypothesis. We consider the link diagram $D$ of $L$ which is a closed braid with $lq$ strands. Then we mark one component. Among the crossings between the marked component and other components, we change the crossing if the marked component is under the crossing. We can easily check that $(l-1)pq$ crossings are switched. Then the marked component becomes split and the remaining part becomes $T((l-1)p, (l-1)q)$. From the induction hypothesis, $$sp(T(lp, lq)) \le (l-1) pq + \frac{(l-1)(l-2)}{2} pq = \frac{l(l-1)}{2} pq.$$

 Next, $L = L_1 \cup \cdots \cup L_l$ when $L_1, \cdots ,L_l$ are all torus knots $T(p,q)$. Therefore $$s_n (L) = (1-n) (lp \cdot lq - lp - lq +1)$$ and $$s_n(L_1) = \cdots = s_n(L_l) = (1-n) (pq - p - q +1)$$ from Theorem \ref{positive}. From Theorem \ref{splitting}, $$\frac{l(l-1)}{2} pq \le sp (L).$$ Therefore, the statement is proved.
\end{proof}

\begin{proof}[Proof of Theorem \ref{equal}]
 $s (L)$ is defined from Lee's homology which is constructed in \cite{lee}, while $s_2 (L)$ is defined from $HKh'_2 (L)$. We fix a link diagram $D$ of $L$. Then Lee's chain complex for $D$ is isomorphic $n=2$ Gornik's chain complex $HKh'_2(D)$, however there is a difference between the quantum grading conventions. The corresponding elements have the same quantum gradings with an opposite sign on the chain level. 

  We review the construction of $s(L)$ in \cite[Section 6]{beliakova}. $\mathbf{s}_{o}, \mathbf{s}_{\bar{o}}$ are elements corresponding to $[g^D_0]$ and $[g^D_1]$ respectively in Lee's homology. The Rasmussen invariant $s(L)$ of a link $L$ is defined to be
\begin{align*} s(L) &:= \frac{\deg(\mathbf{s}_{o} + \mathbf{s}_{\bar{o}}) + \deg(\mathbf{s}_{o}  - \mathbf{s}_{\bar{o}})}{2} \\
&= \max (\deg(\mathbf{s}_{o} + \mathbf{s}_{\bar{o}}), \deg(\mathbf{s}_{o}  - \mathbf{s}_{\bar{o}})) -1 \\
&= \min (\deg(\mathbf{s}_{o} + \mathbf{s}_{\bar{o}}), \deg(\mathbf{s}_{o}  - \mathbf{s}_{\bar{o}}))  + 1,\end{align*} where deg denotes the quantum filtration level. We remark that $\deg(\mathbf{s}_{o} + \mathbf{s}_{\bar{o}}) = - \qgr ([g_0]+[g_1])$, $\deg(\mathbf{s}_{o}  - \mathbf{s}_{\bar{o}}) = - \qgr ( [g_0] - [g_1]).$

  We examine the $s_2 (L)$. Henceforth, let's omit the superscript $D$ in $g^D_i$ and $h^D_i$ for $i=0,1$. From Section 4, $g_0 = h_0 + h_1, g_1 = \pm (h_0 - h_1).$ Then  $$s_2 (L) = \qgr ([g_0]) - 1 = \max \left\{ \qgr([h_0] , \qgr[h_1] \right\}- 1  = \max \left\{ \qgr([g_0] + [g_1]), \qgr([g_0] - [g_1])\right\}- 1.$$ Therefore $s_2 (L) = - s (L).$
\end{proof}

\printbibliography
\end{document}